\newtheorem{conjecture}{Conjecture}
\newtheorem{theorem}{Theorem}
\newtheorem{lemma}[theorem]{Lemma}
\newtheorem{proposition}[theorem]{Proposition}
\newcommand\CC{{\cal C}}
\newcommand\RR{{\cal R}}
\newcommand\SSS{{\cal S}}
\newcommand\TT{{\cal T}}
\newcommand\NN{{\mathbb N}}
\newcommand\ZZ{{\mathbb Z}}
\def\FOlocal{\mathrm{FO}_1^{\mathrm{local}}}
\begin{document}
\title{Strong modeling limits of graphs with bounded tree-width\thanks{This work received funding from the European Research Council (ERC) under the European Union’s Horizon 2020 research and innovation programme (grant agreement No 648509). The second and third authors were also supported by the MUNI Award in Science and Humanities of the Grant Agency of Masaryk University (MUNI/I/1677/2018). This publication reflects only its authors' view; the European Research Council Executive Agency is not responsible for any use that may be made of the information it contains.\newline An extended abstract containing the results presented in this paper has appeared in the proceeding of EuroComb'21.}}
\author{Andrzej Grzesik\thanks{Faculty of Mathematics and Computer Science, Jagiellonian University, {\L}ojasiewicza 6, 30-348 Krak\'{o}w, Poland. E-mail: {\tt Andrzej.Grzesik@uj.edu.pl}.}\and
        Daniel Kr{\'a}l'\thanks{Institute of Mathematics, Leipzig University, Augustusplatz 10, 04109 Leipzig, and Max Planck Institute for Mathematics in the Sciences, Inselstra{\ss}e 22, 04103 Leipzig, Germany. E-mail: {\tt daniel.kral@uni-leipzig.de}.}{\hskip 0.75ex}\thanks{Previous affiliation: Faculty of Informatics, Masaryk University, Botanick\'a 68A, 602 00 Brno, Czech Republic.}\and
        \newcounter{lth}
	\setcounter{lth}{4}
	Samuel Mohr$^\fnsymbol{lth}$}
	
\date{}
\maketitle
\begin{abstract}
The notion of first order convergence of graphs unifies the notions of convergence for sparse and dense graphs.
Ne\v set\v ril and Ossona de Mendez [J. Symbolic Logic 84 (2019), 452--472] proved that
every first order convergent sequence of graphs from a nowhere-dense class of graphs has a modeling limit and
conjectured the existence of such modeling limits with an additional property,
the strong finitary mass transport principle.
The existence of modeling limits satisfying the strong finitary mass transport principle
was proved for first order convergent sequences of trees by Ne\v set\v ril and Ossona de Mendez [Electron. J. Combin. 23 (2016), P2.52] and
for first order sequences of graphs with bounded path-width by Gajarsk\'y et al.~[Random Structures Algorithms 50 (2017), 612--635].
We establish the existence of modeling limits satisfying the strong finitary mass transport principle
for first order convergent sequences of graphs with bounded tree-width.
\end{abstract}

%NesO16 - Modeling Limits in Hereditary Classes: Reduction and Application to Trees
%NesO19 - Existence of Modeling Limits for Sequences of Sparse Structures
%NesO20 - A unified approach to structural limits, and limits of graphs with bounded tree-depth
%NesO12 - A model theory approach to structural limits
%NesO16a - First-order limits, an analytical perspective
%NesO17 - mappings

\section{Introduction}
\label{sec:intro}

The theory of combinatorial limits is an evolving area of combinatorics.
The most developed is the theory of graph limits,
which is covered in detail in a recent monograph by Lov\'asz~\cite{Lov12}.
Further results concerning many other combinatorial structures exist,
e.g.\ for permutations~\cite{GleGKK15,HopKMRS13,HopKMS11,KraP13,ChaKNPSV20,KenKRW20,Kur22} or for partial orders~\cite{HlaMPP15,Jan11}. 
In the case of graphs, limits of dense graphs~\cite{BorCLSSV06,BorCLSV08,BorCLSV12,LovS06,LovS10},
also see~\cite{CorR23,CorR20} for a general theory of limits of dense combinatorial structures, and
limits of sparse graphs~\cite{AldL07,BenS01,BolR11,BorCG17,Ele07,HatLS14} evolved to a large extent independently.
A notion of first order convergence was introduced by Ne\v set\v ril and Ossona de Mendez~\cite{NesO16,NesO20}
as an attempt to unify convergence notions in the dense and sparse regimes.
This general notion can be applied in the setting of any relational structures,
see e.g.~\cite{HosNO17} for results on limits of mappings or~\cite{KarKLM17} on matroids.
Informally speaking,
a sequence of relational structures is \emph{first order convergent}
if for any first order property, the density of $\ell$-tuples of the elements having this property converges;
a formal definition is given in Subsection~\ref{subsec:fo}.
Every first order convergent sequence of dense graphs
is convergent in the sense of dense graph convergence from~\cite{BorCLSV08,BorCLSV12}, and
every first order convergent sequence of graphs with bounded degree
is convergent in the sense of Benjamini-Schramm convergence as defined in~\cite{BenS01}.

A first order convergent sequence of graphs can be associated with an analytic limit object,
which is referred to as a \emph{modeling limit} (see Subsection~\ref{subsec:fo} for a formal definition).
However, not every first order convergent sequence of graphs has a modeling limit~\cite{NesO20} and
establishing the existence of a modeling limit for first order convergent sequences of graphs
is an important problem in relation to first order convergence of graphs:
a modeling limit of a first order convergent sequence of dense graphs yields a graphon,
the standard limit object for convergent sequences of dense graphs, and
a modeling limit of a first order convergent sequence of sparse graphs that satisfies 
the strong finitary mass transport principle (see Subsection~\ref{subsec:fo} for the definition of the principle)
yields a graphing, the standard limit object for convergent sequence of sparse graphs.

Ne\v set\v ril and Ossona de Mendez~\cite{NesO20} conjectured that
every first order convergent sequence of graphs from a nowhere-dense class of graphs has a modeling limit.
Nowhere-dense classes of graphs include many sparse classes of graphs,
in particular, classes of graphs with bounded degree and minor closed classes of graphs;
see~\cite{NesO08a,NesO08b,NesO08c,NesO11,NesO12book} for further details and many applications.
The existence of modeling limits for convergent sequences of graphs from a monotone nowhere-dense class of graphs
was proven in~\cite{NesO19}.
\begin{theorem}[Ne\v set\v ril and Ossona de Mendez~\cite{NesO19}]
\label{thm:general}
Let $\CC$ be a monotone class of graphs.
Every first order convergent sequence of graphs from $\CC$ has a modeling limit if and only if
$\CC$ is nowhere-dense.
\end{theorem}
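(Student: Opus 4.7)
The plan is to prove the two directions separately; the harder ``if'' direction builds modeling limits for nowhere-dense monotone classes out of bounded-tree-depth pieces, while the ``only if'' direction constructs an obstruction whenever $\CC$ is somewhere-dense.

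For the ``only if'' direction I would argue by contrapositive. If $\CC$ is monotone and somewhere-dense, then by the shallow-minor characterization of somewhere-density there is an integer $d$ such that $\CC$ contains the $\leq d$-subdivision of every finite graph. Using this freedom I would construct a first order convergent sequence by taking $\leq d$-subdivisions of suitably chosen random-like graphs. A modeling limit would then have to realize simultaneously, as measurable sets in a standard Borel probability space, every first order type of this random-like structure; a cardinality and Baire-category computation then shows that the family of types that must be realized is too rich to fit inside any such modeling, ruling out its existence.

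For the ``if'' direction I would exploit the low tree-depth colorings characterization of nowhere-dense classes: for every integer $p$ there is an integer $N(p)$ such that every $G\in\CC$ admits a coloring with $N(p)$ colors in which any $p$ color classes induce a subgraph of tree-depth at most $p$. Given a first order convergent sequence $(G_n)$, I would enrich each $G_n$ with such an $N(p)$-coloring for every $p$ (adding $N(p)$ unary predicates to the vocabulary) and, by a diagonal compactness argument, extract a subsequence whose enrichments are simultaneously first order convergent in each of the extended vocabularies. On the subgraphs induced by at most $p$ color classes, which have bounded tree-depth and hence bounded path-width, the existence of modeling limits is given by the result of Gajarsk\'y et al.\ cited in the introduction. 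The remaining task is to assemble these bounded-tree-depth modeling limits into a single modeling limit on a common standard Borel probability space and to verify that every first order formula becomes measurable there.

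The main obstacle is precisely this assembly step, since first order formulas with alternating quantifiers do not respect the coloring partition. To handle it, I would combine Gaifman's locality theorem with the low tree-depth coloring: every first order formula is, up to Gaifman locality, a Boolean combination of formulas local around a ball of bounded radius, and every such ball is, by the coloring property, contained in a subgraph induced by a bounded number of color classes, hence of bounded tree-depth. This localization step is what transfers measurability from the bounded-tree-depth pieces to all definable sets in the glued limit, and it is the technical core of the argument that makes nowhere-density the right hypothesis.
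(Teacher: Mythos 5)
First, note that the paper does not prove this statement at all: Theorem~\ref{thm:general} is quoted verbatim from Ne\v set\v ril and Ossona de Mendez~\cite{NesO19}, so your proposal has to be measured against the argument in that reference rather than against anything in this text. Against that benchmark, both directions of your sketch have genuine gaps. For the ``if'' direction, the decomposition you invoke --- a single integer $N(p)$ of colors such that any $p$ classes induce tree-depth at most $p$ --- is the characterization of classes with \emph{bounded expansion}, not of nowhere-dense classes; for a nowhere-dense class one only gets $|G|^{o(1)}$ many colors, so the number of unary predicates needed to encode the coloring is unbounded along the sequence and the vocabulary enrichment breaks down. Even for bounded expansion, the localization step is not supplied by the coloring: a low tree-depth coloring guarantees that any $p$ color classes induce bounded tree-depth, but it does \emph{not} guarantee that the radius-$r$ ball around a vertex is contained in the union of few classes, which is what your Gaifman-locality transfer needs. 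Finally, the ``assembly step'' you flag as the main obstacle is precisely the hard part of the theorem and is left unresolved. The actual proof in~\cite{NesO19} proceeds quite differently: it uses the uniform quasi-wideness property of nowhere-dense classes (Theorem~\ref{thm:major} above) to mark a bounded set of vertices whose removal makes every $d$-ball of sublinear size, thereby reducing to \emph{residual} sequences, and then builds the modeling on the Stone space of $\FOlocal$ equipped with the Stone measure, realizing types of positive discrete measure as countable fibres and the rest as atomless fibres.

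For the ``only if'' direction, the construction (subdivisions of quasirandom graphs) is the right one, but the obstruction you propose --- ``a cardinality and Baire-category computation'' showing the family of types is ``too rich'' --- is not the correct mechanism and would not close the argument: a standard Borel probability space accommodates continuum many types, and there are never more than continuum many Hintikka chains, so no cardinality or category argument can rule out a modeling. The actual obstruction is measure-theoretic: modelings are \emph{random-free}, meaning that a first order interpretation of a modeling limit as a dense graph limit yields a $\{0,1\}$-valued graphon almost everywhere; interpreting the branching vertices of the subdivisions recovers a limit of the sequence $G(n,1/2)$, whose graphon is constantly $1/2$, a contradiction. Without the random-freeness lemma (or an equivalent measure-theoretic input), the ``only if'' direction does not go through.
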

Theorem~\ref{thm:general} gives little control on the measure of vertex subsets in a modeling limit,
which naturally have the same size in finite graphs, e.g., those joined by a perfect matching.
The strong finitary mass transport principle, vaguely speaking,
translates natural constraints on sizes of vertex subsets to measures of corresponding vertex subsets in a modeling limit.
We refer to Subsection~\ref{subsec:fo} for further details.

Ne\v set\v ril and Ossona de Mendez~\cite{NesO19} conjectured that
Theorem~\ref{thm:general} can be strengthened by adding a condition that
modeling limits satisfy the strong finitary mass transport principle.
\begin{conjecture}[{Ne\v set\v ril and Ossona de Mendez~\cite[Conjecture 6.1]{NesO19}}]
Let $\CC$ be a nowhere-dense monotone class of graphs.
Every first order convergent sequence of graphs from $\CC$ has a modeling limit that
satisfies the strong finitary mass transport principle.
\end{conjecture}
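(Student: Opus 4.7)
The plan is to reduce the general nowhere-dense case to the bounded tree-depth case by exploiting the low tree-depth decomposition theorem of Ne\v set\v ril and Ossona de Mendez: for every nowhere-dense monotone class $\CC$ and every integer $p$, there is a bound $N(p)$ such that every $G \in \CC$ admits a vertex $N(p)$-coloring in which the union of any $p$ color classes induces a subgraph of tree-depth at most $p$. Since bounded tree-depth implies bounded tree-width, the main theorem of this paper (on the bounded tree-width case) should be applicable to these locally trivialized pieces, and the nowhere-density assumption enters only through the existence of such colorings.

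Concretely, given a first order convergent sequence $(G_n)$ from $\CC$, I would first enrich each $G_n$ by unary predicates encoding the low tree-depth colorings for every radius $p$. Using a diagonal argument, and the fact that enriching by finitely many unary predicates preserves first order convergence along a subsequence, I pass to a subsequence so that the whole tower of enriched structures $(G_n^{(p)})_{p \in \NN}$ is first order convergent for every $p$ simultaneously. I then invoke Gaifman's locality theorem, which reduces every FO-formula to a Boolean combination of sentences asserting the existence of many pairwise far-apart vertices satisfying a local formula $\varphi(x)$ of some bounded radius $r$. Such a local formula depends only on the induced $r$-neighborhood of the centre; after passing to the enrichment at $p = p(r)$ large enough, this neighborhood can be covered by a bounded number of color classes, each inducing a graph of tree-depth at most $p$ and hence of bounded tree-width.

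For each radius $r$, I would then apply the bounded tree-width result of this paper to the sequence of enriched local structures to obtain a modeling limit carrying the strong finitary mass transport principle for the fragment of FO-formulas of radius at most $r$. The remaining step is to glue this compatible family of modeling limits, indexed by $r$, into a single modeling limit supported on one standard Borel probability space that realizes the densities of \emph{all} FO-formulas in the original vocabulary and still satisfies the strong finitary mass transport principle. Here the natural tool is an inverse (or projective) limit of the probability spaces produced for each $r$, together with Kolmogorov-type extension / Prohorov compactness arguments on the space of probability measures, to ensure that the limiting object remains a modeling limit rather than a mere projective system of measures.

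The main obstacle I expect is precisely this gluing: the strong finitary mass transport principle is a system of inequalities between measures of FO-definable sets of arbitrary logical complexity, and one must argue that the property is preserved when passing to the inverse limit over $r$. Proving compatibility of the modeling limits at different radii, so that the projections of the radius-$(r+1)$ object agree with the radius-$r$ object on all FO-definable relations, and then verifying that the inherited measure is non-degenerate and verifies the strong finitary mass transport principle on every finite FO-definable constraint, is the chief technical hurdle; the delicate point is that the bounded tree-width construction of this paper produces a canonical disintegration along the tree decomposition, and one needs to show these canonical disintegrations are coherent under refinement of the low tree-depth coloring as $p \to \infty$.
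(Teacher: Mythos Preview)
The statement you are attempting to prove is presented in the paper as an \emph{open conjecture} of Ne\v set\v ril and Ossona de Mendez; the paper does not prove it and makes no claim to. The paper's contribution is Theorem~\ref{thm:main}, which establishes the conjecture only for the special case of classes of bounded tree-width. So there is no ``paper's own proof'' to compare against: you are proposing an argument for a problem that, as far as the paper is concerned, remains open.

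As for the proposal itself, there are several genuine gaps. First, the low tree-depth decomposition with a \emph{constant} number of colors $N(p)$ characterizes classes of \emph{bounded expansion}, not nowhere-dense classes. For a nowhere-dense class that does not have bounded expansion, the number of colors needed grows (albeit subpolynomially) with $|G|$, so you cannot encode the coloring by a fixed finite set of unary predicates and still expect first order convergence of the enriched structures. Second, and more basically, the sentence ``this neighborhood can be covered by a bounded number of color classes'' is false: in a low tree-depth coloring the $r$-ball around a vertex may intersect all $N(p)$ color classes, since nothing in the decomposition controls the local chromatic profile of balls. What the decomposition gives you is that any fixed $p$ color classes induce a subgraph of tree-depth at most $p$, which is a very different statement and does not localize a single $r$-ball inside a bounded tree-width piece.

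Finally, even granting those steps, the inverse-limit gluing is not a technicality but the heart of the matter. The strong finitary mass transport principle concerns \emph{arbitrary} measurable sets, not just first order definable ones, and there is no general mechanism by which a projective limit of modelings, each satisfying the principle for its own $\sigma$-algebra, inherits it on the limit $\sigma$-algebra. The existence of modeling limits for nowhere-dense classes \emph{without} the strong principle is already known (Theorem~\ref{thm:general} in the paper, due to~\cite{NesO19}); the entire difficulty of the conjecture lies precisely in upgrading that construction to satisfy the strong principle, and your outline does not supply any new idea for that step.
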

The existence of modeling limits satisfying the strong finitary mass transport principle
is known for first order convergent sequences
of trees of bounded depth and more generally sequences of graphs with bounded tree-depth~\cite{NesO20},
sequences of trees~\cite{NesO16} and sequences of graphs with bounded path-width~\cite{GajHKKKOOT17},
which can be interpreted in plane trees.
Our main result (Theorem~\ref{thm:main}) establishes
the existence of modeling limits satisfying the strong finitary mass transport principle
for sequences of graphs with bounded tree-width.
\newcounter{storemaintheorem}
\setcounter{storemaintheorem}{\thetheorem}
\begin{theorem}
\label{thm:main}
Let $k$ be a positive integer.
Every first-order convergent sequence of graphs with tree-width at most $k$
has a modeling limit satisfying the strong finitary mass transport principle.
\end{theorem}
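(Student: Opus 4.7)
The plan is to reduce the statement to the existing result for trees from~\cite{NesO16} by means of a first-order interpretation. Given a first-order convergent sequence $(G_n)_{n\in\NN}$ of graphs with tree-width at most $k$, I would first equip each $G_n$ with a rooted nice tree decomposition $T_n$ of width at most $k$ and then encode the pair $(G_n,T_n)$ as a rooted tree $T'_n$ over a finite relational signature whose labels record the ``type'' of each bag: which of the at most $k+1$ positions are occupied, whether the node is an introduce/forget/join/leaf node, and which pairs of positions are adjacent in $G_n$. Because in a nice tree decomposition each vertex of $G_n$ is introduced at exactly one node, the vertex set $V(G_n)$ may be identified with a first-order definable set of nodes of $T'_n$, and the edge relation of $G_n$ can be recovered by a fixed first-order formula that tracks the evolution of ``positions'' along the unique path in $T_n$ joining the introduce nodes of the two endpoints of an edge.

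Next, one argues that first-order convergence of $(G_n)$, after passing to a subsequence if necessary, yields first-order convergence of the labeled trees $(T'_n)$ in the class of labeled rooted trees with the finite alphabet fixed above. Applying the theorem of Ne\v set\v ril and Ossona de Mendez for trees~\cite{NesO16} produces a modeling limit $\mathbf{T}'$ of $(T'_n)$ that satisfies the strong finitary mass transport principle. The modeling limit $\mathbf{G}$ of $(G_n)$ is then obtained by applying the same interpretation to $\mathbf{T}'$: its ground set is the first-order definable subset of $\mathbf{T}'$ corresponding to introduce nodes, equipped with the conditional probability measure inherited from $\mathbf{T}'$, and its edge relation is the one supplied by the interpretation. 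That $\mathbf{G}$ is a modeling limit of $(G_n)$ follows from the fact that first-order interpretations preserve first-order densities.

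The non-routine part is transferring the strong finitary mass transport principle from $\mathbf{T}'$ to $\mathbf{G}$. A mass transport inequality on $\mathbf{G}$ given by first-order formulas $\varphi(x,y)$ and $\psi(x,y)$ has to be translated into an inequality involving first-order definable relations on $\mathbf{T}'$; since the identification $V(G_n)\hookrightarrow V(T'_n)$ is injective and first-order definable, such a translation is natural, but it must be verified that conditioning the measure onto introduce nodes does not gain or lose mass, and that any sizing inequality satisfied by all $G_n$ indeed pulls back to an inequality satisfied by the encoded $T'_n$, so that the corresponding property of $\mathbf{T}'$ provided by \cite{NesO16} implies the desired property of $\mathbf{G}$.

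The main obstacle I anticipate is the construction of sufficiently canonical tree decompositions. Tree decompositions are far from unique, and an arbitrary choice may well destroy first-order convergence of the encoded sequence, so the encoding must be arranged so that local first-order information about each $G_n$ determines local first-order information about $T_n$. I would address this by working with a canonical rooted nice tree decomposition definable from a bounded amount of parameter information (using that $k$-connected components and similar substructures are first-order tractable at bounded tree-width), or, failing a fully canonical choice, by a compactness/subsequence argument that extracts a first-order convergent encoded sequence from any first-order convergent sequence of bounded-tree-width graphs. Making this choice first-order interpretable while simultaneously preserving the mass transport bookkeeping is likely the technical heart of the proof.
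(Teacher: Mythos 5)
Your reduction hinges on the existence of a \emph{first-order} interpretation of graphs of tree-width at most $k$ in labeled rooted trees, and this is precisely the step that fails. The paper explicitly notes that the authors were unable to find such an interpretation, and that this is why the tree-width case is not "an easy combination" of the tree result of Ne\v set\v ril and Ossona de Mendez and the path-width result of Gajarsk\'y et al.\ (the latter works only because bounded path-width graphs \emph{can} be interpreted in plane trees). The concrete obstruction sits exactly where you place the formula that "tracks the evolution of positions along the unique path in $T_n$ joining the introduce nodes of the two endpoints of an edge": that path has unbounded length, positions in the bags are recycled along it, and following a vertex through an unboundedly long sequence of bags is a transitive-closure-type property. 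It is expressible in MSO on the tree but not by any fixed first-order formula, so the edge relation of $G_n$ is not first-order definable from your encoding $T'_n$, and the interpretation argument (including the claim that "first-order interpretations preserve first-order densities") has nothing to apply to. Your own closing paragraph correctly identifies canonicity of the decomposition as a difficulty, but even granting a canonical nice tree decomposition the definability of the edge relation would still fail.

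There is a secondary gap as well: the strong finitary mass transport principle quantifies over \emph{all} measurable sets $A,B$, not just first-order definable ones, so even with a working interpretation one cannot transfer it by translating first-order formulas $\varphi,\psi$; one must control arbitrary measurable sets under the restriction of the measure to the interpreted universe. The paper instead proceeds by a direct construction: it passes to $2$-edge-colored rooted $k$-trees (whose orientation encodes a weak coloring order), builds the limit modeling on a vertex set $V_f\cup V_\infty$ with $V_\infty=\nu^{-1}(\infty)\times[0,1)^{2k+1}$ indexed by Hintikka chains, and defines the $i$-parent maps via an analysis of "important" edges and "detours" so that each parent map is continuous and measure semipreserving; the strong finitary mass transport principle is then verified directly from these measure-semipreserving properties rather than pulled back through an interpretation.
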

While it may seem at the first sight that a proof of Theorem~\ref{thm:main} can be an easy combination
of a proof of the existence of modeling limits satisfying the strong finitary mass transport principle
for trees from~\cite{NesO16} and for graphs with bounded path-width~\cite{GajHKKKOOT17},
this is actually not the case.
In fact, the argument in~\cite{GajHKKKOOT17} is based on interpretation of graphs with bounded path-width in plane trees,
i.e., the results in both~\cite{GajHKKKOOT17} and~\cite{NesO16}
on the existence of modeling limits satisfying the strong finitary mass transport principle
do not go significantly beyond the class of trees. 

We have not been able to find a first order interpretation of graphs with bounded tree-width in (plane) trees, and
we believe that this is related to a possibly complex structure of vertex cuts in such graphs,
which need to be addressed using a more general approach.
Specifically, the proof of Theorem~\ref{thm:main} is based on constructing modeling limits of rooted $k$-trees,
whose orientation essentially encodes the universal weak coloring orders studied in relation to sparse classes of graphs~\cite{NesO12book},
so the proof may be amenable to an extension to graph classes with bounded expansion in principle.
We remark that our arguments can be easily adapted to show
the existence of modeling limits of first order convergent sequences of graphs with bounded tree-width that are residual (in the sense of~\cite{NesO16,NesO19}),
which could yield an alternative proof of Theorem~\ref{thm:main}
when combined with the framework described in~\cite[Theorem 1]{NesO16}.

The proof of Theorem~\ref{thm:main}, similarly to the proof of the existence of modeling limits of plane trees in~\cite{GajHKKKOOT17},
has two steps: the decomposition step, focused on distilling first order properties of graphs in the sequence, and
the composition step, focused on constructing a modeling limit consistent with the identified first order properties.
These two steps also appear implicitly in~\cite{NesO16,NesO20},
in particular, the decomposition step is strongly related to the comb structure results presented in~\cite{NesO16,NesO20}.
The arguments of the decomposition step of the proof of Theorem~\ref{thm:main} are analogous to those used in~\cite[Subsection 3.1]{GajHKKKOOT17}.
The composition step
however requires a conceptual extension of techniques used for modeling limits of trees as
we had to deal with vertex separations of sizes larger than one.
This was achieved by a careful analysis of different types of paths arising in the orientation corresponding to a weak coloring order.
This analysis allows defining the edge set of a modeling limit in a measurable and consistent way for vertex separations of sizes larger than one.

The paper is organized as follows.
In Section~\ref{sec:notation}, we introduce notation used in the paper, in particular,
notions related to graphs with bounded tree-width, first order convergence, and model theory.
In Section~\ref{sec:decompose}, we overview the decomposition step from~\cite{GajHKKKOOT17} and
phrase the presented results in the context of graphs with bounded tree-width.
The core of the paper is Section~\ref{sec:compose}
where we construct modeling limits of edge-colored rooted $k$-trees that satisfy the strong finitary mass transport principle.
This construction is then used in Section~\ref{sec:main} to prove Theorem~\ref{thm:main}.

\section{Notation}
\label{sec:notation}

We first introduce notation specific to this paper.
For the notions not defined here, we refer the reader to \cite{Die10} and~\cite{EbbF05}
for the graph theory terminology and the model theory terminology, respectively.
Some of the less common general notation that we use here include the following.
The set of the first $k$ positive integers is denoted by $[k]$ and $\NN_0$ denotes the set of all non-negative integers.
If $x$ is a real number and $z$ is a positive real,
we write $x\mod z$ for the unique real $x'\in [0,z)$ such that $x=x'+kz$ for some $k\in\ZZ$.
Finally, if $G$ is a graph, then $|G|$ stands for the number of vertices of $G$.

A graph has tree-width at most $k$ if and only if it is a subgraph of a $k$-tree.
A \emph{$k$-tree} can be defined recursively as follows:
each complete graph with at most $k$ vertices is a $k$-tree, and
if $G$ is a $k$-tree, then any graph obtained from $G$ by adding a vertex adjacent to $k$ vertices forming a complete subgraph is also $k$-tree.
We next define a notion of a \emph{rooted $k$-tree}; the definition is also recursive.
Any transitive tournament with at most $k$ vertices is a rooted $k$-tree, and
if $G$ is a rooted $k$-tree and $v_1,\ldots,v_k$ are vertices that form a transitive tournament,
then the graph obtained from $G$ by adding a new vertex $v$ and adding an edge directed from $v$ to $v_i$ for every $i\in [k]$
is also a rooted $k$-tree.
Observe that every rooted $k$-tree is an acyclic orientation of a $k$-tree (the converse need not be true).
In the setting above, assume that $v_1\cdots v_k$ is a directed path;
we say that the vertex $v_i$, $i\in [k]$, is the \emph{$i$-parent} of the vertex $v$ and
the vertex $v$ is an \emph{$i$-child} of $v_i$.
We extend this notation to the initial tournament by setting the out-neighbors of each vertex to be its $1$-parent, $2$-parent, etc.\ 
in a way that a vertex with $\ell$ out-neighbors has an $i$-parent for every $i\in [\ell]$ and
there is an edge from its $i$-parent to its $i'$-parent for every $i<i'$.
An edge from $v$ to its $i$-parent is referred to as an \emph{$i$-edge}.
Hence, every edge is an $i$-edge for some $i\in [k]$.
To simplify our exposition, we say that $e$ is an \emph{$A$-edge} for $A\subseteq [k]$ if $e$ is an $i$-edge for some $i\in A$.
Finally, when $i$ is not important for our considerations,
we may just say that $v'$ is a \emph{parent} of $v$, and $v$ is a \emph{child} of $v'$, if there is a directed edge from $v$ to $v'$,
i.e., when $v'$ is an $i$-parent of $v$ for some $i\in [k]$.

We state several simple properties of rooted $k$-trees in the form of propositions
to be able to refer to them later in our exposition.

\begin{proposition}
\label{prop:tw}
The tree-width of a graph $G$ is the minimum $k$ for which there exists
an orientation of $G$ that is a spanning subgraph of a rooted $k$-tree.
\end{proposition}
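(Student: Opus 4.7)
The plan is to prove the two inequalities separately, the bridge being the standard characterization that a graph has tree-width at most $k$ if and only if it is a spanning subgraph of a $k$-tree on the same vertex set. For the upper bound on tree-width, suppose $\vec G$ is an orientation of $G$ that is a spanning subgraph of some rooted $k$-tree $T$. A direct induction on the recursive definition of a rooted $k$-tree shows that the underlying undirected graph of $T$ is a $k$-tree in the sense of the excerpt, so $G$ is a spanning subgraph of a $k$-tree and therefore has tree-width at most $k$.

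For the matching lower bound, start from a $k$-tree $H$ on the vertex set $V(G)$ that contains $G$ as a spanning subgraph, and fix a construction order $v_1,\ldots,v_n$ that witnesses $H$ as a $k$-tree: there is an index $j_0\le k$ such that $v_1,\ldots,v_{j_0}$ induce a complete graph in $H$ and, for every $j>j_0$, the neighborhood $N_H(v_j)\cap\{v_1,\ldots,v_{j-1}\}$ is a $k$-clique $C_j$ of $H$. Orient every edge of $H$ from the later endpoint to the earlier one with respect to this ordering. I claim that the resulting directed graph $\vec H$ is a rooted $k$-tree. Its restriction to $\{v_1,\ldots,v_{j_0}\}$ is a transitive tournament on at most $k$ vertices, matching the base case of the definition; and for each $j>j_0$, the $k$ out-neighbors of $v_j$ in $\vec H$ are exactly the vertices of $C_j$, which are pairwise joined in $\vec H$ by edges oriented according to the (linear) construction order and thus form a transitive tournament, so the extension rule in the definition of rooted $k$-trees applies. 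Restricting $\vec H$ to the edges of $G$ then yields an orientation of $G$ that is a spanning subgraph of the rooted $k$-tree $\vec H$, as required.

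I do not anticipate any real obstacle. The only external input is the textbook equivalence between tree-width at most $k$ and admitting a spanning supergraph that is a $k$-tree; the rest is just an unfolding of the recursive definitions together with the observation that orienting the edges of a $k$-tree from later to earlier along a construction order automatically produces a rooted $k$-tree structure.
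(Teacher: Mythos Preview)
Your argument is correct. The paper does not actually supply a proof of this proposition; it is stated as one of ``several simple properties of rooted $k$-trees'' and left to the reader, so there is no approach in the paper to compare against. Your route---using the standard equivalence of tree-width $\le k$ with being a partial $k$-tree, and then observing that orienting a $k$-tree along a construction order yields a rooted $k$-tree---is exactly the natural way to justify the statement and matches the spirit in which the paper treats it as folklore.
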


\begin{proposition}
\label{prop:parent}
Let $G$ be a rooted $k$-tree, $v$ a vertex of $G$, and $w$ and $w'$ the $i$-parent and $i'$-parent of $v$ for some $i<i'$.
Then the vertex $w'$ is an $i''$-parent of $w$ for some $i''\le i'$.
\end{proposition}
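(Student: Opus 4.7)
The plan is to prove Proposition~\ref{prop:parent} by a case analysis on whether $v$ lies in the initial transitive tournament used to start the construction of the rooted $k$-tree, or was introduced later by the recursive addition rule. In either case, by the definition of the $p$-parent ordering, $v$'s parents form a transitive tournament with $v_p\to v_q$ whenever $p<q$; in particular $w'=v_{i'}$ is an out-neighbour of $w=v_i$ and hence a parent of $w$, so $w'$ is already the $i''$-parent of $w$ for some $i''\in[k]$. The content of the proposition is therefore just the bound $i''\le i'$.

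For the initial-tournament case, I would write the initial tournament as $u_1\to u_2\to\cdots\to u_m$ with $m\le k$. The extension rule in the definition of rooted $k$-trees forces the $p$-parent of $u_s$ to be $u_{s+p}$, so with $v=u_s$ we have $w=u_{s+i}$ and $w'=u_{s+i'}$, and applying the same formula to $w$ shows that $w'$ is the $(i'-i)$-parent of $w$. Since $i\ge 1$, this gives $i''=i'-i<i'$, as required.

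For a vertex $v$ added by the recursive step, $v$ has exactly $k$ parents $v_1,\dots,v_k$. I would translate the bound on $i''$ into a count: by definition of the $p$-parent ordering, $i''-1$ equals the number of out-neighbours of $w$ that point to $w'$ (these are exactly the $p$-parents of $w$ with $p<i''$). I then split the out-neighbours of $w$ into those lying in $\{v_1,\dots,v_k\}$ and those outside this set. The first set is precisely $\{v_{i+1},\dots,v_k\}$ (of size $k-i$), and among these exactly $\{v_{i+1},\dots,v_{i'-1}\}$ point to $w'$, contributing $i'-i-1$ to the count. Since $w$ has out-degree at most $k$ in any rooted $k$-tree, the complementary set has size at most $k-(k-i)=i$ and so contributes at most $i$. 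Adding the two bounds, $i''-1\le(i'-i-1)+i=i'-1$, hence $i''\le i'$.

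The only delicate point is this final arithmetic: the slack of $i$ out-neighbours of $w$ outside of $\{v_{i+1},\dots,v_k\}$ is exactly what is needed to absorb the difference between $i'-i-1$ and $i'-1$. No induction on the construction is required once the parent list of $v$ is in hand, as everything is controlled by the global bound $k$ on the out-degree together with the transitive-tournament structure of a vertex's parent set.
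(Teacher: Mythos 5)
Your argument is correct. The paper states Proposition~\ref{prop:parent} without proof (it is listed among ``simple properties'' recorded only for later reference), so there is no official argument to compare against; your direct verification is a perfectly valid way to fill the gap. The two pillars of your proof both check out: (i) since the out-edges of a vertex are fixed at the moment it is created and all lead to its parents, the quantity $i''-1$ is indeed the number of out-neighbours of $w$ that point to $w'$, in both the initial-tournament case and the recursive case; and (ii) the count $(i'-i-1)$ from the parents of $v$ above $w$ in the tournament order, plus the slack of at most $i$ from the at most $k-(k-i)$ remaining out-neighbours of $w$, gives exactly $i''-1\le i'-1$. A reader might more reflexively reach for induction along the construction order of the $k$-tree, but as you observe no induction is needed once one knows that out-neighbourhoods never grow after a vertex is introduced; your counting argument is, if anything, cleaner and makes explicit where the bound $i''\le i'$ can be attained with equality (namely when all $i$ ``extra'' out-neighbours of $w$ also point to $w'$).
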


In our exposition, we will consider rooted $k$-trees with edges colored with two colors,
which we will refer to as \emph{$2$-edge-colored $k$-trees}.
Hence, each edge of a $2$-edge-colored $k$-tree is an $i$-edge for some $i\in [k]$ and also has one of the two colors.
In the proof of our main result, given a convergent sequences of graphs $(G_n)_{n\in\NN}$ with tree-width $k$,
we construct modeling limits for a convergent sequence of $2$-edge-colored $k$-trees such that
the graphs $G_n$ are spanning subgraphs of the $2$-edge-colored $k$-trees. 
The $2$-coloring of the edges  of the $2$-edge-colored $k$-trees encodes 
which edges of the $k$-trees of the sequence are also edges of the graphs $G_n$. 

\subsection{First order convergence}
\label{subsec:fo}

We now formally define the notion of first order convergence.
This notion can be used for all relational structures and beyond, e.g., matroids~\cite{KarKLM17},
however, for simplicity, we limit our exposition to graphs, which may (but need not) be directed and edge-colored.
In particular, in the case of $2$-edge-colored $k$-trees,
we consider relational structures with $k+2$ binary relations such that
$k$ of them encode the relation between vertices and their $i$-parents, $i\in [k]$, and
two binary relations encode the edge-coloring with two colors.
If $\psi$ is a first order formula with $\ell$ free variables and $G$ is a (finite) graph,
then the \emph{Stone pairing} $\langle \psi,G\rangle$ is the probability that
a uniformly chosen $\ell$-tuple of vertices of $G$ satisfies $\psi$.
A sequence $(G_n)_{n\in\NN}$ of graphs is \emph{first order convergent}
if the limit $\lim\limits_{n\to\infty}\langle \psi,G_n\rangle$ exists for every first order formula $\psi$.
Every sequence of graphs has a first order convergent subsequence, see e.g.~\cite{NesO20,NesO12,NesO16a}.

A \emph{modeling} $M$ is a (finite or infinite) graph 
with a standard Borel space used as its vertex set equipped with a probability measure
such that for every first order formula $\psi$ with $\ell$ free variables,
the set of all $\ell$-tuples of vertices of $M$ satisfying a formula $\psi$
is measurable in the product measure.
In the analogy to the graph case, the \emph{Stone pairing} $\langle \psi,M\rangle$
is the probability that a randomly chosen $\ell$-tuple of vertices satisfies $\psi$,
i.e., $\langle \psi,M\rangle$ is the measure of the set containing $\ell$-tuples of vertices $v_1,\ldots,v_\ell$ such that
$M\models\psi(v_1,\ldots,v_{\ell})$.
If a finite graph is viewed as a modeling with a uniform discrete probability measure on its vertex set,
then the Stone pairings for the graph and the modeling obtained in this way coincide.
A modeling $M$ is a \emph{modeling limit} of a first order convergent sequence $(G_n)_{n\in\NN}$
if 
$$\lim_{n\to\infty}\langle \psi,G_n\rangle=\langle\psi,M\rangle$$
for every first order formula $\psi$.

Every modeling limit $M$ of a first order convergent sequence of graphs satisfies
the \emph{finitary mass transport principle}. 
This means that
for any two given first order formulas $\psi$ and $\psi'$, each with one free variable, such that
every vertex $v$ satisfying $\psi(v)$ has at least $a$ neighbors satisfying $\psi'$ and
every vertex $v$ satisfying $\psi'(v)$ has at most $b$ neighbors satisfying $\psi$,
it holds that
\[a\langle\psi,M\rangle\le b\langle\psi',M\rangle\,\mbox{.}\]
For further details, we refer the reader to~\cite{NesO16}.

A stronger variant of this principle, known as the \emph{strong finitary mass transport principle}, requires that
the following holds for any measurable subsets $A$ and $B$ of the vertices of $M$:
if each vertex of $A$ has at least $a$ neighbors in $B$ and
each vertex of $B$ has at most $b$ neighbors in $A$, then
\[a\mu(A)\le b\mu(B)\,\mbox{,}\]
where $\mu$ is the probability measure of $M$.
Note that the assertion of the finitary mass transport principle requires this inequality to hold only
for first order definable subsets of vertices.

The strong finitary mass transport principle is satisfied by any finite graph
when viewed as a modeling but it need not hold for modelings in general;
for example, the modeling with the vertex set $[0,1]$ and
the edge set formed by a perfect matching between the Cantor set (or any other uncountable set of measure 0) and its complement
does not satisfy the strong finitary mass transport principle.
In particular, the existence of a modeling limit of a first order convergent sequence of graphs
does not a~priori imply the existence of a modeling limit satisfying the strong finitary mass transport principle, and
indeed the modeling limits constructed in~\cite{NesO19} do not satisfy the strong finitary mass transport principle in general.
The importance of the strong finitary mass transport principle comes from its relation to graphings,
which are limit representations of Benjamini-Schramm convergent sequences of bounded degree graphs, as
a modeling limit of a first order convergent sequence of bounded degree graphs 
must satisfy the strong finitary mass transport principle in order to be a graphing, see~\cite[Subsection~3.2]{NesO20}.

\subsection{Hintikka chains}
\label{subsec:Hintikka}

Our argument uses the topological space of Hintikka chains,
which was used in~\cite{GajHKKKOOT17} and which we now recall.
We remark that this space
is homeomorphic to the Stone dual of $\FOlocal$ studied in~\cite{NesO16,NesO19,NesO20} (the formal definition of $\FOlocal$ is below).
Hintikka sentences are maximally expressive sentences with a certain quantifier depth, and,
informally speaking, Hintikka chains form a local variant of this notion with a single free variable.

Consider a signature that includes the signature of graphs and
is finite except that it may contain countably many unary relational symbols.
In the exposition of our main argument,
the signature will consist of $k+2$ binary relational symbols and
countably many unary relation symbols:
$k$ binary relational symbols for $i$-edges for $i\in [k]$,
two  binary relational symbols for each of the two color classes of the $2$-edge-coloring, and
countably many unary relational symbols $U_i$, $i\in\NN$.
In our setting,
each unary relational symbols will be satisfied by at most one vertex and
will be used to pinpoint some special vertices in graphs that we consider.

A first order formula $\psi$
where each quantifier is restricted to the neighbors of one of the vertices
is said to be \emph{local},
i.e., it only contains quantifiers of the form $\forall_z x$ or $\exists_z x$ and
$x$ is required to be a neighbor of $z$ in one of the binary relations.
For example, we can express that a vertex $z$ has a neighbor of degree exactly one
with the local formula $\psi(z)\equiv\exists_z x\;\forall_x y\; y=z$.
We remark that a definition of local formulas that
permits quantifying over vertices in finite neighborhoods (rather than just neighbors)
does not yield a more general notion of local formulas;
indeed, quantifying over vertices in finite neighborhoods can be replaced by iterated quantifying over neighbors.
The quantifier depth of a local formula is defined in the usual way.
A formula is a \emph{$d$-formula} if its quantifier depth is at most $d$ and
it does not involve any unary predicates $U_i$ with $i> d$;
a $d$-formula with no free variables is referred to as a \emph{$d$-sentence}.

The same argument as in the textbook case of first order sentences yields that
there are only finitely many non-equivalent local $d$-formulas with one free variable for every $d$.
Let $\FOlocal$ be a maximal set of non-equivalent local formulas with one free variable,
i.e., a set containing one representative from each equivalence class of local formulas with one free variable, that
are chosen in a way that from each class a $d$-formula with $d$ as small as possible is chosen as a representative.
If $d$ is an integer, the \emph{$d$-Hintikka type} of a vertex $v$ of a (not necessarily finite) graph $G$
is the set of all $d$-formulas $\psi\in\FOlocal$ such that $G\models\psi(v)$.
We can visualize relations of mutually consistent Hintikka types by an infinite rooted tree $\TT_{\FOlocal}$:
every $d$-Hintikka types for $d\in\NN$ is associated with one of the non-root vertices of the tree $\TT_{\FOlocal}$,
every vertex associated with a $1$-Hintikka type is a child of the root, and
a vertex associated with a $d$-Hintikka type for $d\ge 2$
is a child of the unique vertex associated with the $(d-1)$-Hintikka type
that contains all formulas equivalent to the $(d-1)$-formulas contained in the $d$-Hintikka type.
Observe that every vertex of the infinite rooted tree $\TT_{\FOlocal}$ has a finite degree.

A $d$-formula $\psi\in\FOlocal$ is called a \emph{$d$-Hintikka formula}
if there exist a (not necessarily finite) graph $G$ and a vertex $v$ of $G$ such that
$\psi$ is equivalent to the conjunction of the formulas in the $d$-Hintikka type of the vertex $v$ of the graph $G$
(note that $\psi$ must actually be equivalent to one of the formulas in the $d$-Hintikka type of $v$);
in what follows,
we simply speak about a \emph{Hintikka formula} in case that $d$ is not important.
Fix a $d$-Hintikka formula $\psi$.
Observe that
if $v$ is a vertex of a graph $G$ and $v'$ is a vertex of a graph $G'$ such that $G\models\psi(v)$ and $G'\models\psi(v')$,
then the $d$-Hintikka types of $v$ and $v'$ are the same.
So, we can speak of the \emph{$d$-Hintikka type} of $\psi$;
these are all $d$-formulas $\psi'\in\FOlocal$ such that if $G\models\psi(v)$,
then $G\models\psi'(v)$ for any graph $G$ and any vertex $v$ of $G$.
Note that the $d$-Hintikka type of $\psi$ is uniquely determined by $\psi$, and
so each non-root vertex of the tree $\TT_{\FOlocal}$ can be associated with a single $d$-Hintikka formula.

A \emph{Hintikka chain} is a sequence $(\psi_d)_{d\in\NN}$ such that $\psi_d$ is a $d$-Hintikka formula and
the $d$-Hintikka type of $\psi_d$ contains formulas equivalent to $\psi_1,\ldots,\psi_{d-1}$.
In the infinite rooted tree $\TT_{\FOlocal}$,
Hintikka chains are in one-to-one correspondence with infinite paths from the root:
if $(\psi_d)_{d\in\NN}$ is a Hintikka chain,
then the path is formed by the root and the vertices associated with the $d$-Hintikka type of $\psi_d$.
This implies that for every $d$-Hintikka formula $\psi_d$,
there are only finitely many $(d+1)$-Hintikka formula $\psi_{d+1}$ such that
the $(d+1)$-Hintikka type of $\psi_{d+1}$ contains $\psi_d$.
Following the standard terminology related to infinite rooted trees,
we refer to infinite paths from the root in the tree $\TT_{\FOlocal}$ as to $\emph{rays}$, and
when we speak of a \emph{subtree} of $\TT_{\FOlocal}$, we mean a subgraph formed by a vertex and all its descendants.
Note that if $G$ is a finite graph and $v$ a vertex of $G$,
then the Hintikka chain $(\psi_d)_{d\in\NN}$ such that $G\models\psi_d(v)$ satisfies that
there exists $d_0\in\NN$ such that $\psi_d=\psi_{d_0}$ for all $d\ge d_0$.
However, this is not true for infinite graphs $G$.

If $\psi$ is a $d$-Hintikka formula,
the set $\{(\psi_i)_{i\in\NN}:\psi_d=\psi\}$ of Hintikka chains is called \emph{basic}.
Observe that basic sets of Hintikka chains correspond to sets of rays in $\TT_{\FOlocal}$ that
lead to the same subtree of the tree $\TT_{\FOlocal}$.
It is a classical fact in descriptive set theory, see e.g.~\cite[Chapter 2]{Kec95} or~\cite[Section 2.B]{Tse19},
that rays of an infinite rooted tree with vertices of finite degrees with the topology generated
by clopen sets of rays passing through the same vertex form a Polish space.
In particular,
the set of Hintikka chains (formed by Hintikka formulas with a fixed signature) equipped
with the topology with the base formed by basic sets is a Polish space,
which is denoted by $\SSS_{\FOlocal}$ in the rest of the paper.
We remark that the Polish space $\SSS_{\FOlocal}$
is homeomorphic to the Stone dual of $\FOlocal$ studied in~\cite{NesO20,NesO12}.
Finally,
observe that for every formula $\psi\in\FOlocal$, say $\psi$ is a $d$-formula,
the set of all Hintikka chains $(\psi_i)_{i\in\NN}$ such that the $d$-Hintikka type of $\psi_{d}$ contains $\psi$
is a finite union of basic sets.
In particular, the set of all such Hintikka chains is a clopen set in $\SSS_{\FOlocal}$.

\section{Decomposition step}
\label{sec:decompose}

Our main argument consists of two steps, which we refer in the analogy to~\cite{GajHKKKOOT17}
as the decomposition step and the composition step.
In this section, we present the former,
which is analogous to that for plane trees given in~\cite[Section 3.1]{GajHKKKOOT17};
this step is also closely related to comb structure results presented in~\cite{NesO20,NesO16}.
As the decomposition step follows closely \cite[Section 3.1]{GajHKKKOOT17} and the arguments are analogous,
we present the main ideas and refer the reader to~\cite[Section 3.1]{GajHKKKOOT17} for further details.

We start with recalling the following structural result from~\cite{NesO16c}.

\begin{theorem}
\label{thm:major}
Let $\CC$ be a nowhere-dense class of graphs.
For every $d\in\NN$ and $\varepsilon>0$,
there exists $N\in\NN$ such that every graph $G\in\CC$ contains a set $S$ of at most $N$ vertices such that
the $d$-neighborhood of every vertex $G\setminus S$ contains at most $\varepsilon |G|$ vertices.
\end{theorem}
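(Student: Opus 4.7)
The plan is to prove the contrapositive, leveraging the characterization of nowhere-dense classes by forbidden shallow topological minors: for every $r \in \NN$ there exists a bound $t_r$ such that no graph in $\CC$ contains $K_{t_r}$ as a topological minor at depth $r$. Assuming the conclusion of the theorem fails for some fixed $d$ and $\varepsilon>0$, I will exhibit, for arbitrarily large $s$, a graph $G \in \CC$ containing $K_s$ as a topological minor at depth at most $2d$, contradicting the nowhere-density of $\CC$.

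Given $d$, $\varepsilon$, and a target $N$, I would first perform a greedy selection inside a suitable $G \in \CC$: starting from $T_0 = \emptyset$, at step $i$ the failure of the conclusion for $S = T_{i-1}$ supplies a vertex $v_i \in V(G) \setminus T_{i-1}$ whose $d$-neighborhood $N_i$ in $G \setminus T_{i-1}$ has more than $\varepsilon |G|$ vertices; after $N$ steps this produces a set $T_N = \{v_1, \ldots, v_N\}$ where each $v_i$ is joined in $G$ to every element of $N_i$ by a path of length $\leq d$. A double-counting bound $\sum_i |N_i| > N \varepsilon |G|$ then yields a vertex $u$ belonging to more than $N\varepsilon$ of the $N_i$, hence at distance $\leq d$ in $G$ from more than $N\varepsilon$ of the $v_i$. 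Iterating this averaging, or applying a Ramsey-type extraction to the bipartite ``within distance $d$'' pattern on the product of candidate hubs and selected vertices, produces sets $U = \{u_1, \ldots, u_s\}$ and $V' = \{v_{i_1}, \ldots, v_{i_s}\}$ in which every $u_j$ is joined to every $v_{i_k}$ by a path of length $\leq d$. Routing through the $u_j$'s and disjointifying the paths delivers a subdivision of $K_s$ at depth at most $2d$ on $V'$; choosing $N$ large enough relative to $t_{2d}$ forces $s > t_{2d}$ and contradicts nowhere-density.

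The main difficulty I anticipate lies in the disjointification: the short paths supplied by the greedy construction may share many internal vertices across different pairs, so turning them into an honest topological-minor model demands either further Ramsey pruning or a careful rerouting that only mildly increases the depth. A cleaner alternative I would keep in mind is to invoke the equivalent uniform quasi-wideness characterization of nowhere-dense classes directly on the greedy sequence $v_1, v_2, \ldots$, which packages precisely this combinatorics into a single off-the-shelf lemma and bypasses the explicit $K_s$-subdivision construction.
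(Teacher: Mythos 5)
First, a point of comparison: the paper does not prove Theorem~\ref{thm:major} at all --- it is recalled verbatim from Ne\v set\v ril and Ossona de Mendez~\cite{NesO20} --- so there is no internal proof to measure your argument against, and your proposal has to stand on its own. As written, it has a genuine gap.

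The gap is in your main route. The greedy selection and the double counting are fine, and they do produce large sets $U$ and $V'$ with every $u_j$ within distance $d$ of every $v_{i_k}$. But a complete ``within distance $d$'' pattern does not yield a bounded-depth topological $K_s$, and no Ramsey-type pruning of that pattern can make it do so: in the star $K_{1,n}$ every pair of leaves is at distance $2$, yet the graph contains no topological $K_4$ at any depth. All of the short paths your construction supplies may funnel through a single vertex (or a bounded set of vertices), and the only remedy is to delete those bottleneck vertices --- which is precisely the set $S$ whose existence the theorem asserts. In other words, ``disjointification'' is not a technical nuisance to be handled at the end; it is the entire content of the statement, and the averaging/Ramsey machinery does not address it. Your fallback --- invoking uniform quasi-wideness on the greedy sequence --- is the right tool and is in the spirit of how results of this type are established, but it is not worked out, and a subtlety remains even there: quasi-wideness hands you a bounded set $S'$ and a large subset $W'$ of the $v_i$ that is $2d$-scattered in $G\setminus S'$, so the $d$-balls around $W'$ in $G\setminus S'$ are pairwise disjoint; to contradict $|B_d(v)|>\varepsilon|G|$ for more than $1/\varepsilon$ of them you must know these balls are still large \emph{after} $S'$ is removed, which does not follow from the greedy construction (the balls were only guaranteed large in $G$ minus the previously chosen $v_j$'s). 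Making the greedy choice robust against the deletion of the extra bounded set, or using a strengthened form of quasi-wideness, is still needed to close the argument.
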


We will enhance the signature of graphs by countably many unary relations $U_i$, $i\in\NN$;
these relations will identify vertices described in Theorem~\ref{thm:major},
whose removal makes the considered sequence of graphs residual in the sense of~\cite{NesO16,NesO19}.
In particular, we will require that no vertex satisfies more than one of the relations $U_i$, $i\in\NN$, and
each relation is satisfied by at most one vertex.
To simplify our notation, we will write $c_i$ for the vertex satisfying $U_i$ if it exists.
We will refer to graphs with unary relations $U_i$, $i\in\NN$, as
to \emph{marked graphs} following the terminology used in~\cite{NesO19}.
A sequence of marked graphs $(G_n)_{n\in\NN}$ is {\em null-partitioned}
if the following two conditions hold (the definition is analogous to that for plane trees in~\cite{GajHKKKOOT17}):
\begin{itemize}
\item for every $k\in\NN$,
      there exists $n_k\in\NN$ such that
      there exist unique distinct vertices satisfying $U_1,\ldots,U_k$ in $G_n$ for every $n\ge n_k$,
      i.e., the vertices $c_1,\ldots,c_k$ are well-defined in $G_n$ for every $n\ge n_k$, and
\item for every $\varepsilon>0$, there exist integers $k_0$ and $n_0\ge n_{k_0}$ such that
      the size of each component of $G_n\setminus\{c_1,\ldots,c_{k_0}\}$ is at most $\varepsilon |G_n|$ for every $n\ge n_0$ (note
      that the vertices satisfying $U_1,\ldots,U_{k_0}$ in $G_n$ exist since $n_0\ge n_{k_0}$).
\end{itemize}
Note that if $(G_n)_{n\in\NN}$ is null-partitioned,
then the number of vertices of the graphs $G_n$ must tend to infinity (otherwise,
the first property cannot hold).

The following lemma, which is a counterpart of~\cite[Lemma 4]{GajHKKKOOT17} and the proof follows the same lines,
readily follows from Theorem~\ref{thm:major}, and
also follows from the existence of marked quasi-residual sequences used in~\cite{NesO19}.

\begin{lemma}
\label{lm:unary}
Let $\CC$ be a nowhere-dense monotone class of graphs.
Let $(G_n)_{n\in\NN}$ be a first order convergent sequence of graphs $G_n\in\CC$ such that the orders of $G_n$ tend to infinity.
There exists a first order convergent null-partitioned sequence $(G'_n)_{n\in\NN}$
obtained from a subsequence of $(G_n)_{n\in\NN}$ by interpreting unary relational symbols $U_i$, $i\in\NN$.
\end{lemma}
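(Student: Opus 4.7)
The plan is to invoke Theorem~\ref{thm:major} iteratively to extract from each $G_n$ a bounded family of ``major'' vertices, label them with the unary predicates $U_i$, and then pass to a first-order convergent subsequence in the enriched signature; this follows the template of \cite[Lemma~4]{GajHKKKOOT17}, with Theorem~\ref{thm:major} playing the role of the path-decomposition argument used there.

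First, I would fix auxiliary sequences $d_j \to \infty$ and $\varepsilon_j \to 0$. For each $j \in \NN$, Theorem~\ref{thm:major} applied to the pair $(d_j, \varepsilon_j)$ yields a universal constant $N_j$ (independent of $n$) together with, in every graph $G_n$, a vertex set $S_j^n$ of size at most $N_j$ whose removal bounds every $d_j$-neighborhood by $\varepsilon_j |G_n|$. Replacing $S_j^n$ by $\widehat{S}_j^n := S_1^n \cup \cdots \cup S_j^n$ preserves a bound of the form $|\widehat{S}_j^n| \leq \sum_{i \leq j} N_i$ and makes the family of sets monotone in $j$.

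Next, I would assign the unary predicates canonically, reserving the index block $\{\sum_{i < j} N_i + 1, \ldots, \sum_{i \leq j} N_i\}$ for the new vertices added by $\widehat{S}_j^n \setminus \widehat{S}_{j-1}^n$ and labelling them by the corresponding $U_i$ in some canonical order, leaving the predicates of a block unrealized if the set is smaller than the block. Since $|G_n| \to \infty$, the sets $\widehat{S}_j^n$ attain their nominal sizes eventually, which yields the first clause of the null-partitioned definition. Every sequence of structures over this countable signature admits a first-order convergent subsequence (by the standard diagonal argument over the countably many formulas), so passing to such a subsequence produces the required sequence $(G'_n)_{n \in \NN}$. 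For the second clause, given $\varepsilon > 0$, the right choice is $j$ with $\varepsilon_j < \varepsilon$ and $k_0 = \sum_{i \leq j} N_i$, ensuring $\{c_1, \ldots, c_{k_0}\} \supseteq \widehat{S}_j^n$.

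The main obstacle I anticipate is the passage from the $d_j$-neighborhood bound delivered by Theorem~\ref{thm:major} to the stronger bound on sizes of whole components that the null-partitioned definition demands. Closing this gap is where the nowhere-dense hypothesis must be exploited most carefully: by taking $d_j$ large enough and iterating the decomposition, a component of $G_n \setminus \widehat{S}_j^n$ that is too large to fit into any single $d_j$-neighborhood can itself be broken apart by a further application of Theorem~\ref{thm:major} to the induced subgraph, producing a bounded number of additional separator vertices that are folded into $\widehat{S}_j^n$ and truly cut $G_n$ into pieces of size at most $\varepsilon_j |G_n|$. Once this reduction is in place, the remainder of the verification is routine bookkeeping of the marks and of first-order convergence.
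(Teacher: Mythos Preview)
Your overall strategy---apply Theorem~\ref{thm:major} for a sequence of parameters $(d_j,\varepsilon_j)$, label the resulting separator vertices by the $U_i$, and pass to a first order convergent subsequence via the usual diagonal argument---is exactly what the paper intends; the paper itself offers no more detail than ``readily follows from Theorem~\ref{thm:major}''. One small point: the sets $S_j^n$ have size \emph{at most} $N_j$, not exactly $N_j$, so the first clause of the null-partitioned definition is secured by padding $\widehat S_j^n$ with arbitrary additional vertices (possible since $|G_n|\to\infty$), not by the sets ``attaining their nominal sizes'' on their own.

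The gap you flag between the $d$-neighborhood bound from Theorem~\ref{thm:major} and the \emph{component} bound in the stated definition of null-partitioned is real, but your proposed fix does not close it. Re-applying Theorem~\ref{thm:major} to a large component $C$ only yields a bounded set $S\subseteq C$ making $d$-neighborhoods in $C\setminus S$ small; it does not cut $C$ into small components. In fact no argument of this kind can succeed for arbitrary nowhere-dense monotone classes: the monotone closure of any family of $3$-regular expanders is nowhere-dense (every class of bounded maximum degree is), yet an $n$-vertex $3$-regular expander cannot be disconnected into pieces of size at most $\varepsilon n$ by deleting a bounded number of vertices. So the component form of the conclusion is simply false at the stated level of generality. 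What actually happens in the paper is that only the weaker $r$-neighborhood consequence is ever invoked (see the hypothesis of Lemma~\ref{lm:modeling} and the ``Residuality'' paragraph in the proof of Theorem~\ref{thm:compose}); read that way, Theorem~\ref{thm:major} gives the needed conclusion directly, and your argument---minus the attempted gap-closing---is already complete. For the specific application to rooted $k$-trees in Section~\ref{sec:main}, the component version \emph{does} hold, but via balanced separators in bounded tree-width graphs rather than via Theorem~\ref{thm:major}.
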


The first order properties are linked with Ehrenfeucht-Fra{\"\i}ss\'e games~\cite{EbbF05}.
It is well-known that two structures satisfy the same first order sentences with quantifier depth $d$ if and only if
the duplicator has a winning strategy for the $d$-round Ehrenfeucht-Fra{\"\i}ss\'e game played on two structures,
in our settings, on two $2$-edge-colored rooted marked $k$-trees.
The $d$-round Ehrenfeucht-Fra{\"\i}ss\'e game is played by two players, called the spoiler and the duplicator.
In the $i$-th round, the spoiler chooses a vertex of one of the $k$-trees (the spoiler can choose a different $k$-tree
in different rounds) and places the $i$-th pebble on that vertex.
The duplicator responds with placing the $i$-th pebble on a vertex of the other $k$-tree.
At the end of the game,
the duplicator wins
if the mapping between the subforests induced by the vertices with the pebbles that
maps the vertex with the $i$-pebble in the first $k$-tree to the vertex with the $i$-pebble in the other $k$-tree
is an isomorphism preserving the $j$-parent relations, $j\in [k]$, the unary relations $U_i$, $i\in\NN$, and the edge colors.
If two $2$-edge-colored rooted marked $k$-trees $G$ and $G'$ satisfy the same $d$-sentences if and only if
the duplicator has a winning strategy for the $d$-round Ehrenfeucht-Fra{\"\i}ss\'e game
when played with two $2$-edge-colored rooted marked $k$-trees $G$ and $G'$
with all unary relations $U_i$, $i>d$, set to be empty.

The following theorem is the counterpart of~\cite[Theorem 6]{GajHKKKOOT17}.
The theorem can be proven using an argument analogous to that used to prove \cite[Lemma 5]{GajHKKKOOT17},
which is based on the link of $d$-sentences to the $d$-round Ehrenfeucht-Fra{\"\i}ss\'e games presented above.
However, in our (simpler) setting not involving ordered edge incidences,
the statement of the theorem for each $d\in\NN$
directly follows from Hanf's Theorem applied to relational structures obtained by removing unary relations $U_i$ with $i>d$.
An analogous argument yields that Gaifman's Locality Theorem also extends to our setting
when neighborhoods of vertices are replaced with Hintikka types of vertices.

\begin{theorem}
\label{thm:hanf}
For every integer $d$, there exist integers $D$ and $\Gamma$ such that
for any two (not necessarily finite) $2$-edge-colored rooted marked $k$-trees $G$ and $G'$
if the $k$-trees $G$ and $G'$ have the same number of vertices of each $D$-Hintikka type or
the number of the vertices of this type is at least $\Gamma$ in both $G$ and $G'$,
then the sets of $d$-sentences satisfied by $G$ and $G'$ are the same.
\end{theorem}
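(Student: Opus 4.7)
The approach is to treat this as a direct consequence of Hanf's locality theorem in a reduced signature. Fix $d\in\NN$. Every $d$-sentence uses only unary predicates $U_1,\ldots,U_d$, so its truth in a $2$-edge-colored rooted marked $k$-tree is unaffected by discarding the remaining $U_i$. I would therefore pass to the finite signature $\sigma_d$ consisting of the $k$ $i$-parent relations, the two edge-colour relations, and the unary predicates $U_1,\ldots,U_d$.

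In this finite signature there are only finitely many non-equivalent local formulas of any fixed quantifier depth, hence only finitely many $D$-Hintikka types for each $D$. I would pick $D$ sufficiently large as a function of $d$ so that the $D$-Hintikka type of a vertex determines its behavior in any $d$-round play of the Ehrenfeucht-Fra{\"\i}ss\'e game, and $\Gamma$ large enough (on the order of $d$) that after at most $d$ spoiler moves, a vertex of any specified $D$-Hintikka type whose global multiplicity exceeds $\Gamma$ is still unpebbled in both structures.

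The proof then proceeds by a standard Hanf-style Ehrenfeucht-Fra{\"\i}ss\'e argument: on each spoiler move at a vertex $v$, the duplicator responds with any unpebbled vertex $v'$ in the other structure having the same $D$-Hintikka type. The matching-profile hypothesis of the theorem guarantees that such $v'$ exists, and equality of Hintikka types at depth $D$ propagates, one round and one depth at a time, to equality of the pebbled substructures after $d$ rounds, giving the same $d$-sentences in $G$ and $G'$. The main obstacle to invoking the textbook Hanf theorem verbatim is that it is usually stated for bounded-degree Gaifman graphs, whereas a rooted $k$-tree may have vertices with unboundedly many children; this is bypassed precisely because $D$-Hintikka types, being defined by formulas rather than by isomorphism of neighborhoods, form a finite collection regardless of degree, so the classical EF-game proof of Hanf's theorem carries over without essential change.
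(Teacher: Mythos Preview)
Your approach matches the paper's exactly: the paper states that the theorem ``directly follows from Hanf's Theorem applied to relational structures obtained by removing unary relations $U_i$ with $i>d$,'' which is precisely your reduction to the finite signature $\sigma_d$ followed by a Hanf-type locality argument. Your observation about unbounded degree is a useful elaboration that the paper leaves implicit.

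One point to tighten: your described duplicator strategy---``respond with any unpebbled vertex $v'$ having the same $D$-Hintikka type''---is not sufficient as stated. When the spoiler plays a vertex close to a previously pebbled vertex $u$, the duplicator cannot pick an arbitrary $v'$ of the right type; she must pick $v'$ as a neighbour of the counterpart $u'$, using that equal $D$-Hintikka types of $u,u'$ guarantee matching threshold-counts of $(D{-}1)$-Hintikka types among their neighbours. Your phrase ``propagates, one round and one depth at a time'' indicates you have this in mind, but the strategy needs the usual near/far case split from the Hanf argument rather than a single uniform rule. Since the paper gives no details here either, this is a matter of exposition, not a gap in the approach.
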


Fix a first order convergent null-partitioned sequence of $2$-edge-colored rooted marked $k$-trees $(G_n)_{n\in\NN}$.
We associate the sequence with two functions, $\nu:\FOlocal\to\NN_0\cup\{\infty\}$ and $\mu:\FOlocal\to [0,1]$,
which we will refer to as the \emph{discrete Stone measure} and the \emph{Stone measure} of the sequence (strictly speaking,
$\nu$ and $\mu$ are not measures as they are not defined on a $\sigma$-algebra,
however, each can be extended to a measure on the $\sigma$-algebra formed by Borel sets of $\SSS_{\FOlocal}$
as we will argue in the next paragraph).
If $\psi\in\FOlocal$, then $\nu(\psi)$ is the limit of the number of vertices $u$ such that $G_n\models\psi(u)$, and
$\mu(\psi)$ is the limit of $\langle\psi,G_n\rangle$.
In the analogy to the finite case,
if $M$ is a modeling,
we define its \emph{discrete Stone measure} and its \emph{Stone measure}
by setting $\nu(\psi)=|\{u\mbox{ s.t. }M\models\psi(u)\}|$ and $\mu(\psi)=\langle\psi,M\rangle$ for $\psi\in\FOlocal$.
Again, $\nu$ and $\mu$ are not measures in the sense of measure theory as
they are mappings from $\FOlocal$ and not from a $\sigma$-algebra.

As we have already mentioned, the Stone measure $\mu$ defined in the previous paragraph
yields a measure on the $\sigma$-algebra formed by Borel sets of the topological space $\SSS_{\FOlocal}$,
whose elements (points) are Hintikka chains.
This follows from~\cite[Theorem 2.6]{NesO20} but we sketch a short self-contained argument.
Let $\RR$ be the ring formed by finite unions of basic sets of Hintikka chains;
observe that $\RR$ consists of finite unions of disjoint basic sets of Hintikka chains,
which were introduced in Subsection~\ref{subsec:Hintikka}, and
each basic set of Hintikka chains is formed by all Hintikka chains containing a particular Hintikka formula.
Let $X\in\RR$ be the union of disjoint basic sets corresponding to Hintikka formulas $\psi_1,\ldots,\psi_k$ and
define $\mu_{\RR}(X)=\mu(\psi_1)+\cdots+\mu(\psi_k)$.
Clearly, the mapping $\mu_{\RR}$ is additive.
Since every countable union of non-empty pairwise disjoint sets from $\RR$ that is contained in $\RR$
must be finite (this is implied by K\"onig's Lemma applied to the infinite rooted tree $\TT_{\FOlocal}$),
the mapping $\mu_{\RR}$ is a premeasure.
By Carath\'eodory's Extension Theorem, the premeasure $\mu_{\RR}$ extends to a measure
on the $\sigma$-algebra formed by Borel sets of Hintikka chains.
We will also use $\mu$ to denote this measure,
which is uniquely determined by the Stone measure $\mu$;
we believe that no confusion can arise by doing us so.

The following lemma relates first order convergent sequences of $2$-edge-colored rooted marked $k$-trees and their modeling limits.
The proof is analogous to that of~\cite[Lemma~7]{GajHKKKOOT17}, and
it can also be derived from~\cite[Theorem 36]{NesO16}, also see~\cite[Proof of Theorem~17]{NesO19}.

\begin{lemma}
\label{lm:modeling}
Let $(G_n)_{n\in\NN}$ be a first order convergent null-partitioned sequence of $2$-edge-colored rooted marked $k$-trees with increasing orders and
let $\nu$ and $\mu$ be its discrete Stone measure and Stone measure, respectively.
If $M$ is a $2$-edge-colored rooted $k$-tree modeling such that
\begin{itemize}
\item the discrete Stone measure of $M$ is $\nu$,
      in particular, there exists exactly one vertex $c_i$ of $M$ that satisfies the unary relation $U_i$,
\item the Stone measure of $M$ is $\mu$,
\item the $r$-neighborhood in $M\setminus\{c_i,i\in\NN\}$ of each vertex in $M\setminus\{c_i,i\in\NN\}$
      has zero measure for every $r\in\NN$, 
\end{itemize}
then $M$ is a modeling limit of $(G_n)_{n\in\NN}$.
\end{lemma}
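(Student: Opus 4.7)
The plan is to prove that $\langle \psi, M \rangle = \lim_n \langle \psi, G_n \rangle$ for every first-order formula $\psi$, in two stages: first for sentences, then for formulas with free variables via a locality reduction.

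For a sentence $\sigma$ of quantifier depth $d$, Theorem~\ref{thm:hanf} yields integers $D$ and $\Gamma$ such that the truth of $\sigma$ in a $2$-edge-colored rooted marked $k$-tree is determined by the multiset of counts of $D$-Hintikka types, truncated at $\Gamma$. The hypothesis $\nu_M = \nu$ together with first-order convergence of $(G_n)$ ensures that these truncated counts agree between $M$ and $G_n$ for all sufficiently large $n$: when $\nu(\varphi)<\infty$ the count of vertices of type $\varphi$ in $M$ equals $\nu(\varphi)$, which is eventually the count in $G_n$; when $\nu(\varphi)=\infty$ both $M$ and $G_n$ (for large $n$) have at least $\Gamma$ vertices of type $\varphi$. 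Hence $M \models \sigma$ if and only if $G_n \models \sigma$ for all large $n$, so the $0/1$-valued Stone pairings converge.

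For a formula $\psi(x_1,\ldots,x_\ell)$ with $\ell \geq 1$ free variables, I would apply Gaifman's Locality Theorem to express $\psi$ as a Boolean combination of basic local sentences (handled above) and $t$-local formulas $\psi'(\bar x)$ in the free variables. For $\ell=1$, the Stone pairing of $\psi'(x)$ is by definition $\mu_M(\psi') = \mu(\psi') = \lim_n \langle \psi', G_n \rangle$. For $\ell \geq 2$, I partition $M^\ell$ into the \emph{independent} tuples, on which the variables are pairwise at distance greater than $2t$, and the \emph{entangled} tuples where some pair lies at distance at most $2t$. On the independent part, the $t$-ball around $\bar x$ is a disjoint union of individual $t$-balls, so $\psi'$ reduces to a Boolean combination of single-variable local formulas whose Stone pairings factor as products; the $\ell=1$ case together with convergence of products then yields the desired equality term by term.

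The main obstacle is controlling the entangled part, and this is where the last two bullets of the hypothesis and the null-partitioned property enter. In $M$, the set of pairs of non-marked vertices entangled via a path in $M\setminus\{c_i:i\in\NN\}$ has product measure zero by the fourth bullet, and the marked vertices themselves form a measure-zero set since $\mu(\{c_i\}) = \lim_n \langle U_i, G_n \rangle = \lim_n 1/|G_n| = 0$ for each $i$, so their countable union still has measure zero. In $G_n$, for any $\varepsilon > 0$ the null-partitioned property gives a fixed $k_0$ such that every component of $G_n \setminus \{c_1,\ldots,c_{k_0}\}$ contains at most $\varepsilon |G_n|$ vertices, so the proportion of pairs lying in the same component is at most $\varepsilon$; pairs entangled only through the finite set $\{c_1,\ldots,c_{k_0}\}$ can be absorbed into the single-variable analysis by treating ``within distance $t$ of $c_j$'' as an additional single-variable local predicate. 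I would carry this out in detail following the structure of the analogous argument for plane trees given in \cite[Lemma~7]{GajHKKKOOT17}.
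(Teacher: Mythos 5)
Your overall strategy coincides with the one the paper itself relies on (it gives no self-contained proof of Lemma~\ref{lm:modeling}, deferring to the argument of \cite[Lemma~7]{GajHKKKOOT17}): Hanf's theorem plus the discrete Stone measure to settle all sentences, Gaifman locality to reduce formulas to local ones, the Stone measure hypothesis for one free variable, and residuality plus the null-partitioned property to kill the interaction between distinct free variables. The sentence stage and the $\ell=1$ stage are correct as written.

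There is, however, a concrete gap in your case analysis of the entangled tuples on the modeling side. You cover three kinds of close pairs in $M$: (a) pairs joined by a short path inside $M\setminus\{c_i: i\in\NN\}$ (measure zero by the fourth bullet and Fubini), (b) pairs containing a marked vertex (measure zero), and (c) pairs whose short connections all pass through the \emph{fixed finite} set $\{c_1,\ldots,c_{k_0}\}$ (absorbed into single-variable predicates mentioning $U_1,\ldots,U_{k_0}$). This misses the pairs of unmarked vertices every short path between which hits some marked vertex, but at least one of which avoids $c_1,\ldots,c_{k_0}$, i.e., entanglement mediated by $c_j$ with $j>k_0$. In $G_n$ such pairs sit in a common component of $G_n\setminus\{c_1,\ldots,c_{k_0}\}$ and are covered by your $\varepsilon$-bound; but in $M$ the fourth bullet says nothing about them, since it only removes \emph{all} marked vertices, and these pairs need not have measure zero (a single $c_j$ with $j>k_0$ may have a positive-measure neighborhood in $M$). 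They also cannot be absorbed into case (c): a formula such as ``$x$ and $y$ have a common neighbor'' can be witnessed by $c_j$ even though $\psi$ does not mention $U_j$, so the truth value is not determined by the single-variable types relative to $U_1,\ldots,U_{k_0}$. To close this you must bound $\mu_M^{\otimes 2}$ of the set $\bigcup_{j>k_0} B_{2t}(c_j)\times B_{2t}(c_j)$. This is doable, but only by transferring quantitative information from the finite graphs through one-variable local formulas: the null-partitioned property gives $\mu\bigl(B_{2t}^{M\setminus\{c_1,\ldots,c_{k_0}\}}(c_j)\bigr)\le\varepsilon$ for every $j>k_0$ (the formula ``within distance $2t$ of the $U_j$-vertex avoiding the $U_1,\ldots,U_{k_0}$-vertices'' is local with one free variable, so its measure in $M$ equals its limit density in $G_n$, which is at most the normalized component size), and one then needs a summability estimate of the form $\sum_{j}\mu\bigl(B_{2t}(c_j)\bigr)=O(1)$ — available here because every vertex of a rooted $k$-tree has at most $k$ out-neighbours — to conclude that the total measure of these pairs is $O(\varepsilon)$. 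Without some argument of this type the reduction of the $\ell\ge 2$ case to the $\ell=1$ case does not go through, so this step should be made explicit rather than delegated to the citation.
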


Note that Lemma~\ref{lm:modeling} says in particular that
if $M$ is a modeling such that
\[\lim_{n\to\infty}\langle \psi,G_n\rangle=\langle\psi,M\rangle\]
for every $\psi\in\FOlocal$,
the discrete Stone measure of $M$ is $\nu$, and
the modeling has the residuality property in the sense of~\cite{NesO16,NesO19}
with respect to the vertices that null-partition graphs $G_n$ (this is the third property in Lemma~\ref{lm:modeling}),
then
\[\lim_{n\to\infty}\langle \psi,G_n\rangle=\langle\psi,M\rangle\]
for every first order formula $\psi$.

\section{Composition step}
\label{sec:compose}

The following theorem is the core result of this paper.

\begin{theorem}
\label{thm:compose}
Fix a positive integer $k$.
Let $(G_n)_{n\in\NN}$ be a first order convergent null-partitioned sequence of $2$-edge-colored rooted marked $k$-trees and
let $\nu$ and $\mu$ be its discrete Stone measure and Stone measure, respectively.
Then there exists a modeling $M$ such that
\begin{itemize}
\item the discrete Stone measure of $M$ is $\nu$,
\item the Stone measure of $M$ is $\mu$,
\item there exists exactly one vertex $c_i$ of $M$ that satisfies the unary relation $U_i$ for all $i\in\NN$,
\item the $r$-neighborhood of each vertex in $M\setminus\{c_i,i\in\NN\}$ has zero measure for every $r\in\NN$,
\item the modeling $M$ satisfies the strong finitary mass transport principle, and
\item the modeling $M$ also satisfies the strong finitary mass transport principle
      when all edges of one of the two colors are removed.
\end{itemize}
\end{theorem}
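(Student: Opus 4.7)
The plan is to build the modeling $M$ in two stages: first, assemble the underlying measurable vertex space carrying the prescribed Stone measures; second, equip it with the $i$-parent relations for $i\in[k]$ and the two edge colors in a measurable and consistent manner. For the vertex set, I take the Polish space of Hintikka chains from Subsection~\ref{subsec:Hintikka}, equip it with the extension of $\mu$ to Borel sets, and append countably many atoms: one atom for each marked vertex $c_i$, assigned its eventual Hintikka chain after passing to a subsequence along which each $c_i$ has a well-defined limit type, together with, for every Hintikka formula $\psi$ with $\nu(\psi)$ finite, enough further atoms so that the total number of vertices whose Hintikka chain extends $\psi$ equals $\nu(\psi)$. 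By construction the discrete Stone measure is $\nu$ and the Stone measure is $\mu$. Since in the $M$ I will define each non-marked vertex receives at most $k$ parents and its descendants form a countable subset reachable by finite-length words over a countable alphabet, the $r$-neighbourhood of a non-marked vertex is countable and therefore $\mu$-null.

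The bulk of the work is defining the $i$-parent maps $\pi_i\colon V\to V\cup\{\perp\}$ together with the edge colors. The guiding principle is that the Hintikka chain of a vertex $v$ determines, for each $i\in[k]$, whether $v$ has an $i$-parent, the Hintikka chain of that parent truncated by one quantifier level, and the color of the corresponding $i$-edge. For each $i$ I therefore want a measurable function routing $\mu$-mass from vertices of a prescribed Hintikka profile into valid parents of a compatible profile, in proportions inherited from the convergent sequence $(G_n)_{n\in\NN}$. Using the refinement tree $\TT_{\FOlocal}$, I construct such routings level by level: at depth $d$, for each admissible pair of $d$-Hintikka types I split the relevant mass according to the finitely many $(d+1)$-refinements, in proportions dictated by $\mu$ and $\nu$, and transport it into measurable target sets of the correct type. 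Taking the inverse limit as $d\to\infty$ produces the desired measurable maps $\pi_i$ and color functions, and Stone pairings of any local $d$-formula in $M$ match the prescribed values by construction.

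The main obstacle is that $\pi_1,\dots,\pi_k$ cannot be defined independently: in a rooted $k$-tree the parents of a single vertex form a transitive tournament, so Proposition~\ref{prop:parent} forces $\pi_{i'}(v)$ to coincide with $\pi_{i''}(\pi_i(v))$ for some $i''\le i'$ whenever $i<i'$. Thus the joint distribution of $(\pi_1(v),\dots,\pi_k(v))$ is tightly coupled by the Hintikka chain of $v$; this is precisely the vertex-separation-larger-than-one phenomenon absent in the tree case. I handle it by not routing each $\pi_i$ independently but instead routing joint parent-cliques: at level $d$, the profile being refined is not a single Hintikka type of $v$ but the entire labelled tournament consisting of $v$ together with all of its ancestors, classified by the sequences of edge labels along descending paths in the weak coloring orientation. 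The measurable splitting of this joint mass is carried out by a standard Borel matching argument, fixing every edge whose two endpoints are already committed through a previously routed ancestor relation and distributing the remaining freedom without conflict; the bookkeeping of such path-types is the "careful analysis of different types of paths arising in the orientation corresponding to a weak coloring order" advertised in the introduction.

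With the edge structure in place, Lemma~\ref{lm:modeling} certifies that $M$ is a modeling limit of $(G_n)_{n\in\NN}$ with the prescribed Stone measures and the residuality of non-marked neighbourhoods. The strong finitary mass transport principle follows because every edge of $M$ was constructed via a measurable routing that preserves exact mass on both endpoints: for any measurable $A,B\subseteq V$ satisfying the hypotheses, summing the routed flows across $i\in[k]$ and the two colors yields $a\mu(A)\le b\mu(B)$. The same argument, restricted to a single color class, gives the strong finitary mass transport principle after deleting the edges of the other color, verifying the last bullet of the theorem.
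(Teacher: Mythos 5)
Your overall architecture (a Hintikka-chain vertex space plus countably many atoms for the finite-multiplicity types, parent maps dictated by the chains, a mass-transport argument for the strong finitary mass transport principle) has the same shape as the paper's, but several steps as written do not work. First, your continuous part of the vertex set is just the space of Hintikka chains equipped with $\mu$. This cannot be right: $\mu$ typically has atoms, i.e.\ single chains $\Psi$ with $\mu(\Psi)>0$ and $\nu(\Psi)=\infty$, and the modeling must contain uncountably many \emph{distinct} vertices realizing the same chain $\Psi$ --- for instance, a vertex whose chain prescribes infinitely many children of type $\Psi$ needs infinitely many distinct such children. The paper takes $V_\infty=\nu^{-1}(\infty)\times[0,1)^{2k+1}$ precisely to blow each such chain up into a continuum of vertices; without some such product your parent maps cannot exist even set-theoretically. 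Second, ``routing mass level by level and taking an inverse limit'' produces at best a coupling of measures, not a pointwise-defined parent function, and the genuinely hard content of the proof lives exactly where you defer to ``a standard Borel matching argument'': one must define the parent maps by explicit measure-preserving formulas, distinguish finitary from infinitary edges and edges with and without detours, and then prove (by induction on path types) that the resulting parent is independent of the defining path, that the parents of each vertex form the prescribed tournament (edge consistency), and that no directed cycle is created. None of this is routine, and it is the part of the argument that goes beyond the tree and path-width cases.

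Two further claims are false as stated. The $r$-neighborhood of a non-marked vertex is not countable: a vertex reached by an infinitary edge has an uncountable set of children (the preimage of a point under the parent map contains a positive-dimensional cube), so residuality needs a different argument --- the paper arranges, via an irrational shift $h\mapsto h+\sqrt{2}\bmod 1$ built into the parent maps, that all vertices of the $r$-neighborhood share a coordinate taken from a finite set of values, hence form a null set. And the assertion that ``every edge was constructed via a measurable routing that preserves exact mass on both endpoints'' fails for infinitary edges, where a positive-measure set of children maps onto a null (or much smaller) set of parents; to get the strong finitary mass transport principle one must first show, using the degree bounds $a$ and $b$, that the portions of $A$ and $B$ joined by infinitary edges have measure zero, and only then apply the measure-semipreserving property of the parent maps to what remains.
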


\begin{proof}
We start by extending the mapping $\nu$ to Hintikka chains, i.e. elements of the topological space $\SSS_{\FOlocal}$:
if $\Psi=(\psi_i)_{i\in\NN}$ is a Hintikka chain, then we define
\[\nu(\Psi)=\lim_{i\to\infty}\nu(\psi_i)\,\mbox{;}\]
note that the limit always exists since $\nu(\psi_i)_{i\in\NN}$ is non-increasing.
Observe that the set $\nu^{-1}(\infty)$ is closed in $\SSS_{\FOlocal}$.
Indeed, the set $\nu^{-1}(\infty)$
is the complement of the countable union of the basic sets of Hintikka chains
corresponding to Hintikka formulas $\psi$ with $\nu(\psi)\in\NN_0$.

Observe that the support of the measure $\mu$ on the $\sigma$-algebra of Borel sets of $\SSS_{\FOlocal}$,
which is yielded by the Stone measure $\mu$,
is the set containing all Hintikka chains $(\psi_i)_{i\in\NN}$
such that the measure of the basic set of Hintikka chains corresponding to $\psi_i$ is positive for every $i\in\NN$.
We next show that the support of the measure $\mu$ is a subset of $\nu^{-1}(\infty)$;
in general, the support of $\mu$ need not be equal to $\nu^{-1}(\infty)$.
Assume for contradiction that $\nu(\Psi)=k\in\NN$ for some $\Psi$ in the support of $\mu$.
Then there exists a $d$-Hintikka formula $\psi_d$ such that $\nu(\psi_d)=k$.
Hence, there exists $n_0$ such that every $G_n$, $n\ge n_0$,
contains exactly $k$ vertices $u_1,\dots,u_k$ such that $G_n\models\psi_d(u_i)$ for $i\in[k]$.
Since the number of vertices of the graphs $G_n$, $n\in\NN$ tend to infinity (as
the sequence $(G_n)_{n\in\NN}$ is null-partitioned) and
the Stone pairing $\langle \psi_d,G_n\rangle$ for every $n\ge n_0$
is equal to $k$ divided by the number of vertices of $G_n$,
it follows that $\mu(\psi_d)=0$.
Consequently,
the measure of the basic set of Hintikka chains corresponding to $\psi_d$ is zero
contradicting the assumption that $\Psi$ is in the support of $\mu$.

The rest of the proof of the theorem is split into several steps,
each of which we start with a brief title that gives a description of the step.

\textbf{Vertex set.} The vertex set of the modeling $M$ that we construct consists of two sets:
$V_f$ contains all pairs $(\Psi,i)$ where $\Psi$ is a Hintikka chain such that $\nu(\Psi)\in\NN$ and $i\in [\nu(\Psi)]$, and
$V_{\infty}=\nu^{-1}(\infty)\times [0,1)^{2k+1}$.
Observe that the set $V_f$ is countable;
indeed, if $\nu(\Psi)\in\NN$,
then there exists $d\in\NN$ such that the $d$-th formula $\psi_d$ of $\Psi$ satisfies that $\nu(\psi_d)=\nu(\Psi)$;
since the number of Hintikka formulas is countable, it follows that $V_f$ is countable.
Also note that for every $i\in\NN$, the set $V_f$ contains the unique vertex satisfying $U_i$.
The topology on $V_f\cup V_{\infty}$ is defined later.

A Hintikka chain encodes many properties of a vertex.
In particular, it uniquely determines the Hintikka chain of the $i$-parent of the vertex $v$ for each $i\in [k]$,
which we will refer to as the $i$-parent Hintikka chain.
The Hintikka chain also determines whether the vertex is one of the vertices satisfying $U_i$, $i\in\NN$, and
whether it is contained in the initial tournament (initial stands here for initial in the recursive definition of a rooted $k$-tree).
Furthermore,
the Hintikka chain also determines the number of children that satisfy a particular local first order formula and,
more generally, that have a certain Hintikka chain.
Finally,
the Hintikka chain describes the existence or the non-existence of finite paths consisting only of $A$-edges for arbitrary $A\subseteq [k]$ between the parents of the vertex.
We say that an $i$-edge $e$ contained in such a path is \emph{$m$-finitary} for $m\in\NN$
if the head of $e$ is the head of exactly $m$ $i$-edges whose tail has the same Hintikka chain as the tail of $e$.
If an edge $e$ is $m$-finitary for some $m$, we say that $e$ is \emph{finitary}, otherwise
if $e$ is not $m$-finitary for any $m$, then $e$ is \emph{infinitary}.
Note that whether $e$ is $m$-finitary for some $m\in\NN$ or infinitary
is implied by the Hintikka chain of either the head or the tail of $e$.
In a slightly informal way,
we will be speaking about all these properties as the properties of the Hintikka chain.

\textbf{Unary relations.}
We now continue with the construction of the modeling $M$ with defining the unary relations $U_i$, $i\in\NN$.
For each $i\in\NN$,
there is exactly one Hintikka chain $\Psi$ with $\nu(\Psi)>0$ such that the vertex of $\Psi$ satisfies $U_i$.
It holds $\nu(\Psi)=1$ for such a Hintikka chain $\Psi$, and
the vertex $(\Psi,1)\in V_f$ will be the unique vertex of $M$ satisfying the unary relation $U_i$.
In what follows, we write $c_i$ for this vertex of $M$.

\textbf{Edges.}
We next define the edges of the modeling $M$
by describing the edges leading from each vertex of $M$ to its parents.
To do so,
we fix for every $d\in [2k]$ a continuous measure preserving bijection $\zeta^d:[0,1)\to [0,1)^d$;
for $x\in [0,1)$ and $i\in [d]$, we write $\zeta^d_i(x)$ for the $i$-th coordinate of $\zeta^d(x)$.

We first consider the vertices contained in $V_f$.
Let $(\Psi,m)\in V_f$ (note that such a vertex may be one of the vertices $c_i$, $i\in\NN$).
For every $i\in [k]$ such that $\Psi$ implies the existence of an $i$-parent, we proceed as follows.
The definition of the discrete Stone measure and the first order convergence of $(G_n)_{n\in\NN}$ imply that
there exists $d_0\in\NN$ such that
for every $d\ge d_0$
there exists $n_d\in\NN$ such that
every graph $G_n$, $n\ge n_d$, has exactly $\nu(\Psi)$ vertices satisfying the $d$-th formula of the Hintikka chain $\Psi$.
Let $\Psi'$ be the $i$-parent Hintikka chain of $\Psi$.
Observe that $\Psi'\not=\Psi$;
otherwise,
since
the $i$-parent of every vertex of $G_{n_{d_0+1}}$ whose $(d_0+1)$-Hintikka type is the $(d_0+1)$-th formula of the Hintikka chain $\Psi$
has the $d_0$-Hintikka type equal to the $d_0$-th formula of the Hintikka chain $\Psi$,
the $(d_0+1)$-Hintikka type of the $i$-parent is equal to the $(d_0+1)$-th formula of the Hintikka chain $\Psi$, and
so the vertices whose $(d_0+1)$-Hintikka type is the $(d_0+1)$-th formula of the Hintikka chain $\Psi$ form a cycle,
which is impossible since any rooted $k$-tree is acyclic.
Note that the Hintikka chain of $\Psi'$ determines the number of $i$-children
whose $d_0$-Hintikka type is the $d_0$-th formula of the Hintikka chain $\Psi$, and
let $D$ be this number.
Since every graph $G_n$, $n\ge n_{d+D+2}$, $d\ge d_0$,
has exactly $\nu(\Psi)$ vertices whose $d$-Hintikka type is the $d$-th formula of the Hintikka chain $\Psi$,
the $i$-parent of each of these vertices have its $(d+D+1)$-Hintikka type equal to the $(d+D+1)$-th formula of the Hintikka chain $\Psi'$, and
this $i$-parent has exactly $D$ $i$-children whose $d$-Hintikka type is the $d$-th formula of the Hintikka chain $\Psi$,
we obtain that $\nu(\Psi')\not=\infty$ and $\nu(\Psi)=D\nu(\Psi')$.
In particular, $\nu(\Psi')$ is a non-zero integer which divides $\nu(\Psi)$.
In the modeling $M$,
we set the $i$-parent of the vertex $(\Psi,m)$ to be the vertex $(\Psi',m')$ where $m'=\lceil m\nu(\Psi')/\nu(\Psi) \rceil$, and
the color of the edge from $(\Psi,m)$ to $(\Psi',m')$ as determined by the Hintikka chain $\Psi$.
This determines all edges between the vertices of $V_f$,
which include all edges leaving the vertices of $V_f$.

Next, consider a vertex $(\Psi,n_0,h_1,n_1,\ldots,h_k,n_k)\in V_{\infty}$.
Fix $i\in [k]$.
Since there are at most $k$ vertices without an $i$-parent (these are the vertices of the initial tournament
in the definition of a rooted $k$-tree),
the Hintikka chain $\Psi$ determines that the vertex described by $\Psi$ has an $i'$-parent for every $i'\in [k]$,
in particular, it has an $i$-parent.
Let $\Psi'$ be the Hintikka chain of the $i$-parent.

We consider all finite directed paths from $(\Psi,n_0,h_1,n_1,\ldots,h_k,n_k)$ to its $i$-parent whose existence is implied by $\Psi$;
we will say that a directed path (whose existence is implied by $\Psi$) from the tail to the head of an $m$-edge $e$
is a \emph{detour} of $e$ if
\begin{itemize}
\item $e$ is a finitary edge and the path is formed by $[m-1]$-edges only, or
\item $e$ is an infinitary edge and all infinitary edges of the path are $[m-1]$-edges.
\end{itemize}
An edge with no detour is said to be \emph{important}.
Observe that if $e$ is a finitary edge,
then any detour of $e$ is formed by finitary edges only (if a detour contained an infinitary edge,
then there would be infinitely many directed paths from vertices with the same Hintikka chain as the tail of $e$ to the head of $e$ and
since such paths start at different vertices, $e$ would be infinitary).
Observe that
$\Psi$ implies the existence of a directed path from the vertex $(\Psi,n_0,h_1,n_1,\ldots,h_k,n_k)$ to its $i$-parent that
is formed by important edges only.
Indeed, replacing an $i'$-edge $e$ that is not important with a detour of $e$
either decreases the number of infinitary $\{i',\ldots,k\}$-edges, or
preserves the number of infinitary $\{i',\ldots,k\}$-edges while decreasing the number of all $\{i',\ldots,k\}$-edges;
hence, the process of replacing edges that are not important with corresponding detours as long as possible always terminates.

Fix a directed path $P$ from $(\Psi,n_0,h_1,n_1,\ldots,h_k,n_k)$ to its $i$-parent formed by important edges only;
note that $P$ is a type of a directed path rather than an actual path.
Let $\ell$ be the length of the path $P$.
Set $\Psi^0$ to be the Hintikka chain $\Psi$ and
set $\Psi^j$ for $j=1,\ldots,\ell$ to be the Hintikka chain of the head of the $j$-th edge of the path $P$.
Let $\ell_\infty$ the largest $j$ such that $\nu(\Psi^j)=\infty$ (possibly $\ell_\infty=0$ or $\ell_\infty=\ell$).

We next identify vertices $(\Psi^j,n^j_0,h^j_1,n^j_1,\ldots,h^j_k,n^j_k)$ for $j=1,\ldots,\ell_{\infty}$,
which are actual vertices in a modeling $M$;
the vertex $(\Psi^j,n^j_0,h^j_1,n^j_1,\ldots,h^j_k,n^j_k)$ should be the head of the $j$-th important edge on the actual path
from the vertex $(\Psi,h_1,n_1,\ldots,h_k,n_k)$ to its $i$-parent in the modeling $M$ that corresponds to $P$.
So, we define $(2k+1)$-tuples $(n^j_0,h^j_1,n^j_1,\ldots,h^j_k,n^j_k)$ for $j=0,\ldots,\ell_{\infty}$ recursively as follows.

We first set $(n^0_0,h^0_1,n^0_1,\ldots,h^0_k,n^0_k)$ to $(n_0,h_1,n_1,\ldots,h_k,n_k)$.
If the $j$-th edge of the path $P$ is an infinitary $i'$-edge,
we set
\begin{align*}
(n^j_0,h^j_1,n^j_1,\ldots,h^j_k,n^j_k)=(&\zeta_1^{2i'}(n^{j-1}_{i'}),\ldots,\zeta^{2i'}_{2i'-1}(n^{j-1}_{i'}),\\
                                        &h^{j-1}_{i'}+\sqrt{2}\mod 1,\zeta_{2i'}^{2i'}(n^{j-1}_{i'}),\\
	                                &h^{j-1}_{i'+1},n^{j-1}_{i'+1},\ldots,h^{j-1}_k,n^{j-1}_k).
\end{align*}
If the $j$-th edge of the path $P$ is an $m$-finitary $i'$-edge and
the edge from the head of the $j$-th edge to its $i''$-parent for every $i''=i'+1,\ldots,k$
is infinitary or has a detour,
we set
\begin{align*}
(n^j_0,h^j_1,n^j_1,\ldots,h^j_k,n^j_k)=(&m\cdot n^{j-1}_0\mod 1,h^{j-1}_1,n^{j-1}_1,\ldots,h^{j-1}_{i'-1},n^{j-1}_{i'-1},\\
                                        &h^{j-1}_{i'}+\sqrt{2}\mod 1,n^{j-1}_{i'},\\
					&h^{j-1}_{i'+1},n^{j-1}_{i'+1},\ldots,h^{j-1}_k,n^{j-1}_k).
\end{align*}
Otherwise,
i.e., when the $j$-th edge of the path $P$ is an $m$-finitary $i'$-edge and
there exists $i''>i'$ such that the edge from the head of the $j$-th edge to its $i''$-parent is finitary and has no detour,
we set
\begin{align*}
(n^j_0,h^j_1,n^j_1,\ldots,h^j_k,n^j_k)=(m\cdot n^{j-1}_0\mod 1,h^{j-1}_1,n^{j-1}_1,\ldots,h^{j-1}_k,n^{j-1}_k).
\end{align*}

If $\ell=\ell_\infty$,
set the $i$-parent of $(\Psi,n_0,h_1,n_1,\ldots,h_k,n_k)$ to be $(\Psi',n^\ell_0,h^\ell_1,n^\ell_1,\ldots,h^\ell_k,n^\ell_k)$ and
the color of the edge from $(\Psi,n_0,h_1,n_1,\ldots,h_k,n_k)$ to the $i$-parent as determined by $\Psi$;
observe that the $i$-parent of $(\Psi,n_0,h_1,n_1,\ldots,h_k,n_k)$ is always different from $(\Psi,n_0,h_1,n_1,\ldots,h_k,n_k)$ as
$h^\ell_j=h_j+m\sqrt{2}\mod 1$ for some $m\in\NN$ where $j$ is the largest index such that $h_j\not=h^\ell_j$ (note that
it is possible that $\Psi=\Psi'$).

If $\ell>\ell_\infty$,
we set the $i$-parent of $(\Psi,n_0,h_1,n_1,\ldots,h_k,n_k)$ to be $\left(\Psi',1+\lfloor\nu(\Psi')\cdot n^{\ell_\infty}_k\rfloor\right)$;
the color of the edge from $(\Psi,n_0,h_1,n_1,\ldots,h_k,n_k)$ to the $i$-parent is again determined by $\Psi$.
Note that
in the latter case
we can think of the $j$-th vertex for $j>\ell_\infty$ as being $\left(\Psi^j,1+\lfloor\nu(\Psi^j)\cdot n^{\ell_\infty}_k\rfloor\right)$,
which is consistent with the definition of edges among the vertices of $V_f$.
This concludes the definition of the edge set of the modeling $M$.

\textbf{Well-defined.}
We next verify that the definition of the $i$-parent of a vertex does not depend on the choice of a directed path $P$ (as long as
$P$ is formed by important edges only).
We prove that
if $P$ and $P'$ are two directed paths from a vertex $(\Psi,n_0,h_1,n_1,\ldots,h_k,n_k)$ to its $i$-parent that
are formed by important edges only,
then they yield the same definition of the $i$-parent;
the proof proceeds by induction on the sum of the lengths of $P$ and $P'$.
The base of the induction is formed by the case
when the sum of the lengths of $P$ and $P'$ is equal to two,
i.e., the directed paths $P$ and $P'$ are actually the same edge and so the claim holds.

We now present the induction step.
Fix a vertex $v=(\Psi,n_0,h_1,n_1,\ldots,h_k,n_k)$ and
two directed paths $P$ and $P'$ from the vertex $v$ to its $i$-parent, which we denote by $u$, that
both are formed by important edges only.
Let $w$ and $w'$ be the heads of the first edge of $P$ and $P'$, and
let $j$ and $j'$ be such that the first edges of $P$ and $P'$ are an $j$-edge and an $j'$-edge, respectively.
See Figure~\ref{fig:welldefined} for the notation.
By symmetry, we may assume that $j\le j'$.
Note that $w$ is the $j$-parent of $v$, $w'$ is its $j'$-parent, 
$u$ is the $i'$-parent of $w$ for some $i'\leq i$, and
let $\Psi_w$ and $\Psi_{w'}$ be the Hintikka chains of $w$ and $w'$, respectively.

We first deal with the case that $j=j'$, which implies that $w=w'$.
Consider the directed paths $Q$ and $Q'$ obtained from $P$ and $P'$ by removing their first edge, respectively.
If $\nu(\Psi_w)=\infty$,
the induction assumption implies that the paths $Q$ and $Q'$ yield the same definition of the $i'$-parent of $w$.
If $\nu(\Psi_w)\not=\infty$,
the definition of edges between the vertices of $V_f$ implies that
the paths $Q$ and $Q'$ yield the same definition of the $i'$-parent of $w$.
In either of the cases, the paths $P$ and $P'$ yield the same definition of the $i$-parent of $v$.

\begin{figure}
\begin{center}
\epsfbox{folim-tw-1.mps}
\end{center}
\caption{Notation used in the proof of Theorem~\ref{thm:compose} when establishing that edges of $M$ are well-defined.}
\label{fig:welldefined}
\end{figure}

We next deal with the case that $j\not=j'$.
By Proposition~\ref{prop:parent} applied rooted $k$-trees $G_n$,
the Hintikka chain $\Psi$ implies that the $j'$-parent of $v$ is
the $j''$-parent of the the $j$-parent of $v$ for some $j''\le j'$,
i.e., $w'$ should be the $j''$-parent of $w$;
note that we still need to establish that
the vertex defined to be the $j'$-parent of $v$ by the first edge of the path $P'$ in $M$
is also the $j''$-parent of the $j$-parent of $v$ defined by the first edge of the path $P$, and
this is indeed the core of the argument when $\nu(\Psi_w)=\infty$.
Since the first edge of $P'$ is important,
it holds that $j'=j''$ (otherwise, $vww'$ is a detour of the edge $vw'$) and
the edge $ww'$ is important (otherwise, the edge $vw$ together with a detour of the edge $ww'$ is a detour of the edge $vw'$).
Let $Q$ be the directed path $P$ with the first edge removed and $Q'$ be the directed path $P'$ with the first edge replaced with $ww'$.

If $\nu(\Psi_w)\not=\infty$,
then $\nu(\Psi_{w'})\not=\infty$ and
the definition of the edges from $V_{\infty}$ to $V_f$ yields that
$w=(\Psi_w,1+\lfloor\nu(\Psi_w)n_k\rfloor)$ and $w'=(\Psi_{w'},1+\lfloor\nu(\Psi_{w'})n_k\rfloor)$ (the latter
is the same regardless whether we use the edge $vw'$ to define $w'$ or the edge $ww'$ once $w$ has been defined).
The definition of edges between the vertices of $V_f$ implies that
the paths $Q$ and $Q'$ yield the same definition of the $i'$-parent of $w$.

If $\nu(\Psi_w)=\infty$,
the paths $Q$ and $Q'$ yield the same definition of the $i'$-parent of $w$ by induction as
they both are formed by important edges only.
To finish the proof of the induction step,
we need to establish that the paths $vw'$ and $vww'$ yield the same definition of the vertex $w'$,
which is the $j'$-parent of $v$.
We first consider the case that $\nu(\Psi_{w'})\not=\infty$.
It follows that both edges $vw'$ and $ww'$ are important infinitary $j'$-edges,
which in turn implies that the last coordinate of $v$ and $w$ is the same.
Hence, $w'$ is $(\Psi_{w'},1+\lfloor\nu(\Psi_{w'})n_k\rfloor)$
when defined using either of the paths $vw'$ and $vww'$.
In the rest, we assume that both $\nu(\Psi_w)$ and $\nu(\Psi_{w'})$ are equal to $\infty$.

If the edge $vw'$ is infinitary, then the edge $ww'$ is also infinitary (otherwise, $vww'$ is a detour of $vw'$).
As both edges $vw'$ and $ww'$ are important infinitary $j'$-edges and $vw$ is an $j$-edge for $j<j'$,
the vertex $w'$ is the same when defined using either of the paths $vw'$ and $vww'$.
If the edge $vw'$ is $m$-finitary, then both edges $vw$ and $ww'$ must also be finitary;
moreover, if $vw$ is $m_1$-finitary and $ww'$ is $m_2$-finitary, then $m=m_1m_2$.
Hence, the vertex $w$ is $(\Psi_w,n_0\cdot m_1\mod 1,h_1,n_1,\ldots,h_k,n_k)$ as
the edge $vw$ is a $j$-edge and the edge $ww'$ is an important $j'$-edge for $j'>j$.
If there exists an edge from $w'$ to its $\widehat{j}$-parent for some $\widehat{j}>j'$ that is finitary and has no detour,
then the vertex $w'$ is $(\Psi_{w'},n_0\cdot m\mod 1,h_1,n_1,\ldots,h_k,n_k)$.
Otherwise, the vertex $w'$ is $(\Psi_{w'},n_0\cdot m\mod 1,h_1,\ldots,n_{j'-1},h_{j'}+\sqrt{2}\mod 1,n_{j'},\ldots,n_k)$.
Hence, the vertex $w'$ is the same when defined using either of the paths $vw'$ and $vww'$ in all cases.

Since the edges of $M$ are well-defined,
it follows that the number of $i$-children with a given Hintikka type of every vertex $v$ of $M$
is the number determined by the Hintikka type of $v$.

\textbf{Edge consistency.}
We next verify the following property, which we refer to as \emph{edge consistency}.
For every vertex $w$, the following holds:
if $w'$ and $w''$ are the $i'$-parent and the $i''$-parent of $w$, $i'<i''$, respectively, and
$w''$ is the $i$-parent of $w'$ according to the Hintikka chain of $w$,
then $w''$ is the $i$-parent of $w'$ in $M$ (the notation is illustrated in Figure~\ref{fig:edge-www}).
Note that this property is not automatically satisfied as the definition of edges from $w'$ to its parents
is independent of the definition of edges from $w$ to its parents.

\begin{figure}
\begin{center}
\epsfbox{folim-tw-2.mps}
\end{center}
\caption{Notation used in the proof of Theorem~\ref{thm:compose} when establishing that the edge consistency.}
\label{fig:edge-www}
\end{figure}

If $w\in V_f$,
then the edge consistency straightforwardly follows from the definition of edges among the vertices of $V_f$.
If $w\in V_{\infty}$, consider a directed path $P'$ from $w$ to $w'$ and a directed path $P''$ from $w'$ to its $i$-parent,
each formed by important edges only.
Note that the concatenation of the paths $P'$ and $P''$ is a directed path formed by important edges only, and
so the concatenation can be used to define the $i''$-parent of $w$, i.e., the vertex $w''$.
Hence, if $w'\in V_{\infty}$,
then the path $P'$ can be used to define the $i'$-parent $w'$ of $w$,
the path $P''$ can be used to define the $i$-parent of $w'$, and
their concatenation can be used to define the $i''$-parent of $w$, and
so the edge consistency follows as the edges are well-defined, which we have already established.
If $w'\in V_f$ (and so $w''\in V_f$),
then the path $P'$ can be used to define the $i'$-parent $w'$ of $w$, and
the definition of edges among the vertices of $V_f$ implies that 
the $i$-parent of $w'$ is the same vertex as
the $i''$-parent of $w$ defined by the concatenation of the paths $P'$ and $P''$.
Hence, the edge consistency holds in this case, too.

\textbf{Acyclicity.}
We next argue that the modeling $M$ is acyclic, i.e., it does not contain a finite directed cycle.
We first verify this for vertices contained in $V_f$.
Observe that the Hintikka chain of any vertex reachable (using the edges following their orientation)
from a vertex $(\Psi,m)\in V_f$ can never be $\Psi$.
The argument is similar as the one used when defining edges between vertices of $V_f$.
Indeed, if a vertex with Hintikka chain $\Psi$ can be reached from $(\Psi,m)$ by a directed path of length $k$,
then there exist $d$ and $n_0$ such that
every graph $G_n$ for $n\ge n_0$ has $\nu(\Psi)$ vertices whose $d$-Hintikka type is the $d$-formula of $\Psi$ and
there is a directed path from each such vertex to another vertex whose $d$-Hintikka type is the $d$-formula of $\Psi$;
this is impossible as any rooted $k$-tree is acyclic.
We conclude that if $M$ has a directed cycle, then it is comprised by vertices of $V_{\infty}$ only.

Observe that
if two vertices $(\Psi,n_0,h_1,n_1,\ldots,h_k,n_k)$ and $(\Psi',n'_0,h'_1,n'_1,\ldots,h'_k,n'_k)$ are joined by an edge,
then either $h_j=h'_j$ for all $j\in [k]$, or
there exists $j\in [k]$ such that $h'_j=h_j+\sqrt{2}\mod 1$ and $h'_{j'}=h_j$ for $j'=j+1,\ldots,k$, and
in addition, if $h_j=h'_j$ for all $j\in [k]$, then $n_j=n'_j$ for all $j\in [k]$.
Hence, if $M$ has a cycle formed by vertices of $V_{\infty}$,
then all vertices in this cycle have the same coordinates $h_j$, and so $n_j$ for all $j\in [k]$,
i.e., the last $2k$ coordinates of all vertices of the cycle are the same.
Consider such a cycle formed by vertices $(\Psi^1,n^1_0,h_1,n_1,\ldots,h_k,n_k),\ldots,(\Psi^\ell,n^\ell_0,h_1,n_1,\ldots,h_k,n_k)$.
However, this is only possible if every edge $e$ of the cycle satisfies the following:
$e$ is a finitary $i$-edge,
the head of $e$ has an $i'$-parent for $i'>i$ such that the edge from the head of $e$ to the $i'$-parent is finitary and important.
Let $i_0$ be the maximum of such $i'$ taken over all edges of the cycle.

Consider a vertex $v$ of the cycle such that the edge to its $i_0$-parent is finitary and important and
let $e$ be the edge of the cycle leading from $v$.
By the choice of $i_0$, the edge $e$ is an $i$-edge for $i<i_0$ (otherwise,
its head would have an $i'$-parent for $i'>i_0$ such that the edge to the $i'$-parent is finitary and important).
Since the edge from $v$ to its $i_0$-parent is important,
the $i_0$-parent of $v$ is also the $i_0$-parent of the head of $e$:
it is $i'$-parent for $i'\le i_0$ by Proposition~\ref{prop:parent} and
$i'=i_0$ as otherwise, there would be a detour for the edge from $v$ to its $i_0$-parent.
We conclude that there exists a vertex $w$ such that
$w$ is the $i_0$-parent of every vertex of the cycle and
the edge from each vertex of the cycle to $w$ is finitary (and important).

Let $\Psi$ be the Hintikka chain of any vertex of the cycle.
The construction of the modeling $M$ implies that
$w$ has an $i_0$-child with the Hintikka chain $\Psi$ and
there is a finite directed path to another $i_0$-child of $w$ with the Hintikka chain $\Psi$;
it follows that $w$ has infinitely many $i_0$-children with the Hintikka chain $\Psi$ and
so the edges from such $i_0$-children to $w$ cannot be finitary.
We conclude that $M$ has no directed cycle comprised by vertices of $V_{\infty}$ only.

\textbf{Measure and its properties.}
We now define a probability measure $\mu_M$ on $V_f\cup V_{\infty}$,
which is the probability measure on $M$ from the definition of a modeling.
We start by defining a topology on $V_f\cup V_{\infty}$:
a subset $X\subseteq V_f\cup V_{\infty}$ is open if
the set $X\cap V_{\infty}$
is open in the product topology of $\SSS_{\FOlocal}$ and $[0,1)^{2k+1}$.
In particular, every subset of the countable set $V_f$ is open.
Since $\SSS_{\FOlocal}$ restricted to $\nu^{-1}(\infty)$ (recall that the set $\nu^{-1}(\infty)$ is closed in $\SSS_{\FOlocal}$, which itself is a Polish space) and
$[0,1)^{2k+1}$ are Polish spaces, i.e., separable completely metrizable topological spaces,
their product is a Polish space.
Since the set $V_f$ is countable,
it follows that the Borel $\sigma$-algebra on $M$ is a standard Borel space.
Finally, the measure of a Borel set $X\subseteq V_f\cup V_{\infty}$ is the measure of the set $X\cap V_{\infty}$
given by the product measure determined by the measure $\mu$ (defined on the $\sigma$-algebra of Borel sets of $\SSS_{\FOlocal}$) and
the usual (Borel) measure on $[0,1)^{2k+1}$.

We next verify that $M$ is a modeling,
in particular, that all first order definable subsets of $M^k$ for every $k\in\NN$ are measurable, and
has the properties given in the statement of the theorem.
The edge consistency of $M$ implies that,
for every Hintikka chain $\Psi=(\psi_d)_{d\in\NN}$,
the vertices $v$ of $M$ such that the $d$-Hintikka type of $v$ is $\psi_d$ for all $d\in\NN$
are exactly the vertices whose first coordinate is $\Psi$,
i.e., vertices $(\Psi,i)\in V_f$ if $\nu(\Psi)\in\NN$ and
vertices $(\Psi,n_0,h_1,n_1,\ldots,h_k,n_k)\in V_{\infty}$ if $\nu(\Psi)=\infty$.
In particular, it holds that $\langle \psi_d,M\rangle=\mu(\psi_d)$ by the definition of the measure on $X\cap V_{\infty}$.
If $\psi$ is a local first order formula with a single free variable,
then the set of the Hintikka chains $\Psi$ of vertices $v$ of $M$ such that $M\models\psi(v)$
is a finite union of disjoint basic sets of Hintikka chains,
specifically, if $\psi$ has quantifier depth at most $d$ and uses only the first $d$ unary relations $U_i$,
then this is the set of Hintikka chains $\Psi$ such that the $d$-Hintikka type of $\Psi$ contains $\psi$.
This implies that the subset of the vertices $v$ of $M$ such that $M\models\psi(v)$
is open (in the topological space $V_f\cup V_{\infty}$) and so measurable, and
its measure is equal to the sum of the Stone measures of the $d$-Hintikka formulas that
are associated with the roots of the subtrees of $\TT_{\FOlocal}$ corresponding to these basic sets.
We also obtain that the discrete Stone measure and the Stone measure of the modeling $M$
are $\nu$ and $\mu$, respectively.

We now argue, using the way that $M$ was constructed, that
all first order definable subsets of $M^\ell$, $\ell\in\NN$, are measurable in the product measure and
the modeling $M$ satisfies the strong finitary mass transport principle.
This argument is analogous to the corresponding argument in the proof of~\cite[Lemma 40]{NesO16}
but we include it for completeness.
For $i\in [k]$,
let $g_i$ be the (partial) mapping from $V_f\cup V_{\infty}$ to $V_f\cup V_{\infty}$ that
maps $x\in V_f\cup V_{\infty}$ to the $i$-parent of $x$, and
let $g^0_i$ be the (partial) mapping from $V_f\cup V_{\infty}$ to $V_f\cup V_{\infty}$ that
maps $x\in V_f\cup V_{\infty}$ to the $i$-parent of $x$
whenever the $i$-parent of $x$ is contained in $V_f$ or
the $i$-parent of $x$ is contained in $V_{\infty}$ and
the edge joining $x$ to the $i$-parent is important.

Let $g'_i$ for $i\in [k]$ be the mapping from Hintikka chains to Hintikka chains such that
$g'_i(\Psi)$ is the Hintikka chain of the $i$-parent of a vertex described by $\Psi$.
Observe that the mapping $g'_i$
is continuous on the space of Hintikka chains with the topology defined in Subsection~\ref{subsec:Hintikka}.
Since the mappings $f(x):=x+\sqrt{2}\mod 1$, $f_i(x)=ix\mod 1$, $i\in\NN$, $g'_i$, $i\in [k]$, and $\zeta^d$, $d\in [2k]$,
are continuous (in the case of $f$ and $f_i$
when viewed as a function from $S_1=\mathbb{R}/\ZZ$ to $S_1=\mathbb{R}/\ZZ$ rather than $[0,1)$ to $[0,1)$),
the definition of the edges of $M$ yields that
each of the mappings $g^0_i$, $i\in [k]$, is measurable (in fact,
it would be continuous if we replaced the topology on $[0,1)^{2k+1}$ with that of $S_1^{2k+1}$).
Since $\mu$ is the Stone measure of a first order convergent sequence of rooted $2$-edge-colored $k$-trees,
the mapping $g'_i$ also satisfies the following:
if $Y$ is a measurable subset of Hintikka chains such that each element of $Y$ has exactly $d$ $i$-children,
then $\mu((g'_i)^{-1}(Y))=d\mu(Y)$.
As the mappings $f$, $f_i$ and $\zeta^d$ are measure preserving,
we deduce the following:
if $X$ is measurable subset of the domain of $g^0_i$ and
$Y$ is a measurable subset of the codomain of $g^0_i$ such that each vertex of $Y$ has exactly $d$ $i$-children contained in $X$,
then $\mu_M((g^0_i)^{-1}(Y)\cap X)=d\mu_M(Y)$.

We next show that each of the mappings $g_i$, $i\in [k]$, is measurable and
has a measure preserving-like property similar to that of $g^0_i$,
which is given at the end of the previous paragraph.
Fix $i\in [k]$, and let $V_i$ be the domain of $g^0_i$.
We next consider directed paths $P$ comprised of two or more edges;
similarly as when we defined edges of $M$,
we think of a directed path $P$ as a type of a path rather than an actual path in a modeling;
note that $P$ bears information which of its edges are $1$-edges, $2$-edges, etc.
Let $\preceq$ be the order on directed paths $P$ such that
the paths are first ordered by their lengths (shorter paths preceding longer paths) and
the paths with the same length are ordered lexicographically (as viewed as words
whose $i$-th letter is $m$ if the $i$-th edge of $P$ is an $m$-edge).
Let $V_{i,P}$ be the set containing those $x\in (V_f\cup V_{\infty})\setminus V_i$ such that
$M$ contains a directed path from $x$ to its $i$-parent formed by important edges only that
matches the types of edges in $P$
but $M$ contains no directed path from $x$ to its $i$-parent formed by important edges that
matches the types of edges in any path $P'\prec P$.
Since every subset of $M$ that can be defined by a local first order formula with a single free variable is measurable,
the set $V_{i,P}$ is measurable for every $i\in [k]$ and every $P$.
Observe that the mapping $g_i$ restricted to $V_{i,P}$ is a composition of $g^0_j$, $j\in [k]$,
in particular, the restriction of $g_i$ to $V_{i,P}$ is measurable and
has the property given at the end of the previous paragraph.
Since the sets $V_i$ and $V_{i,P}$ partition the domain of $g_i$,
the mapping $g_i$ is measurable and satisfies the following:
if $X$ and $Y$ are measurable subsets of $V_f\cup V_{\infty}$ such that
each vertex of $Y$ has exactly $d$ $i$-children contained in $X$,
then $\mu_M(g_i^{-1}(Y)\cap X)=d\mu_M(Y)$.
We refer to this property of the mapping $g_i$ as \emph{measure semipreserving}.

We now establish that every first order-definable subset of $M^\ell$ is Borel, i.e., measurable.
We have already argued that every subset of $M$ defined by a local first order formula $\psi$ is open.
Since the mapping $g_i$ is measurable,
the subset of $M^2$ given by the $i$-th child-parent relation is Borel for every $i\in [k]$, and
more generally any subset of $M^\ell$ that corresponds to an $\ell$-vertex path that
follows specified child-parent relations is Borel.
Gaifman's Locality Theorem with vertex neighborhoods replaced with Hintikka types (see the discussion before Theorem~\ref{thm:hanf})
yields that whether an $\ell$-tuple of vertices of $M$ satisfies a particular formula with $\ell$ free variables
depends on the Hintikka types of the $\ell$ vertices and the configuration formed by them,
i.e., the subgraph induced by their sufficiently large neighborhoods (in particular,
whether the distance of the vertices is small and if so what is their mutual position
in the neighborhood of each other).
If two vertices have small distance in a rooted $k$-tree,
then there exists a path between them that is initially formed by outgoing edges and then by ingoing edges only;
since the set of pairs of vertices joined by such a path form a projection of a Borel set,
the set of such pairs of vertices is Borel.
It follows that every first order-definable subset of $M^{\ell}$, $\ell\in\NN$,
is finite union and intersection of Borel sets;
the sets in the union and intersection determine Hintikka types of the vertices of the $\ell$-tuple and
the (non-)existence and the type of paths between them, and
so every first order-definable subset of $M^{\ell}$ is Borel.
Since every first order-definable subset of $M^\ell$ is Borel,
we have just finished establishing that $M$ is a modeling.

\textbf{Strong finitary mass transport principle.}
We now verify that the modeling $M$ satisfies the strong finitary mass transport principle,
i.e., we show that any two measurable subsets $A$ and $B$ of $M$ such that
each vertex of $A$ has at least $a$ neighbors in $B$ and
each vertex of $B$ has at most $b$ neighbors in $A$
satisfy that $a\mu_M(A)\le b\mu_M(B)$.
Fix such subsets $A$ and $B$ and the corresponding integers $a$ and $b$.
Since the measure of $V_f$ is zero, we can assume that $A\subseteq V_{\infty}$.
Similarly,
since the measure of vertices that are joined to any of their children by an infinitary edge is zero,
we can also assume that all edges directed from $B$ to $A$ are finitary.

For $i\in [k]$,
let $V'_i$ be the subset of $V_i$ formed by those $x\in V_i$ such that
the edge from $x$ to the $i$-parent of $x$ is infinitary.
In particular, $V'_i$ contains all $x\in V_i$ such that
the $i$-parent of $x$ is contained in $V_f$ (recall that $V_i$ is the domain of $g^0_i$).
Similarly,
we define $V'_{i,P}$ for a directed path $P$ with two or more edges
to be the subset of $V_i$ formed by those $x\in V_{i,P}$ such that
the directed path from $x$ to its $i$-parent described by $P$ contains an infinitary edge.
Note that $g^0_i$ has the following property for every integer $m\in\NN$:
if sets $X\subseteq V_i$ and $Y\subseteq V_f\cup V_{\infty}$ satisfy that
each vertex $y\in Y$ has at most $m$ $i$-children in $X$,
then the set $X\cap (g^0_i)^{-1}(Y)$ has measure zero.
Since the restriction of the mapping $g_i$ to $V_{i,P}$ is a composition of mappings $g^0_j$, $j\in [k]$,
the following holds for every subset $X\subseteq V'_{i,P}$ and $Y\subseteq V_f\cup V_{\infty}$:
if there exists an integer $m\in\NN$ such that
each vertex of $Y$ has at most $m$ $i$-children in $X$,
then the set $X\cap g_i^{-1}(Y)$ has zero measure.
Finally, since the sets $V_i$ and $V_{i,P}$ partition the domain of $g_i$ and
the edge from $x$ to its $i$-parent is infinitary iff $x$ is contained in $V'_i$ or $V'_{i,P}$ for some $P$,
we conclude that the following holds for every integer $m\in\NN$:
if $X$ is a subset of the domain of $g_i$ such that
the edge from each vertex of $X$ to its $i$-parent is infinitary and
$Y$ is a subset of $V_f\cup V_{\infty}$ such that
each vertex of $Y$ has at most $m$ $i$-children in $X$,
then the set $X\cap g_i^{-1}(Y)$ has zero measure.

For every $i\in [k]$,
let $A_i$ be the set of all $x\in A$ such that the edge from $x$ to its $i$-parent is infinitary.
Since each vertex of $B$ has at most $b$ $i$-children in $X$,
the set $A_i\cap g_i^{-1}(B)$ has zero measure.
Hence, we can assume that $A_i=\emptyset$ for every $i\in [k]$ (as the elements of each set $A_i$
can be removed from $A$ without altering its measure).
This yields that all edges of $M$ directed from the set $A$ to the set $B$ are finitary.
Since all edges directed from the set $A$ to the set $B$ are finitary and $A\subseteq V_{\infty}$,
no edge between $A$ and $B$ is incident with a vertex of $B\cap V_f$.
Hence, we can assume that $B\subseteq V_{\infty}$ in the rest.
Recall that
we have already argued that we can assume that all edges of $M$ directed from the set $B$ to the set $A$ are finitary.
Since both sets $A$ and $B$ are subsets of $V_{\infty}$,
all edges between $A$ and $B$ (in either direction) are finitary and
each of the mappings $g_i$, $i\in [k]$, is measure semipreserving,
it follows that $a\mu_M(A)\le b\mu_M(B)$.
Hence, the modeling $M$ has the strong finitary mass transport principle.

We remark that an analogous argument yields that the strong finitary mass transport principle is preserved
if all edges of $M$ that have one of the two colors are removed as
the restriction of each mapping $g_i$ to the vertices joined to the $i$-parent by an edge of the non-removed color
is also measure semipreserving.

\textbf{Residuality.}
It now remains only to verify the fourth property from the statement of the theorem.
Suppose that there exist a vertex $(\Psi,i)\in V_f$ and $r\in\NN$ such that
the $r$-neighborhood of $(\Psi,i)$ in $M\setminus\{c_i,i\in\NN\}$,
i.e., the set of vertices reachable from $(\Psi,i)$ by (not necessarily directed) paths of length at most $r$ that
do not contain any $c_i$, $i\in\NN$,
has a positive measure, say $\varepsilon>0$.
Let $\Psi=(\psi_j)_{j\in\NN}$.
Since $\nu(\Psi)$ is finite, there exists $\psi_j$ such that $\nu(\psi_j)=m$ and $m\in\NN$.
By the definition of a null-partitioned sequence of graphs,
there exist integers $n_0$ and $k_0$ such that
the $r$-neighborhood of each vertex in $G_n\setminus\{c_1,\ldots,c_{k_0}\}$, $n\ge n_0$,
contains at most $\varepsilon |G_n|/2m$ vertices.
In particular, this holds for the $m$ vertices satisfying $\psi_j$.
This implies that the Stone measure of vertices joined to one of the $m$ vertices satisfying $\psi_j$
by a (not necessarily directed) path of length at most $r$ that avoids $c_1,\ldots,c_{k_0}$
does not exceed $\varepsilon/2$ (note that this
property is first order expressible and therefore captured by the Stone measure).
Hence, the $r$-neighborhood of $(\Psi,i)$ in $M\setminus\{c_i,i\in\NN\}$ can have measure at most $\varepsilon/2$,
contrary to our assumption that its measure is $\varepsilon$.

Next, suppose that there exist a vertex $(\Psi,n_0,h_1,n_1,\ldots,h_k,n_k)\in V_\infty$ and an integer $r$ such that
the $r$-neighborhood of $(\Psi,n_0,h_1,n_1,\ldots,h_k,n_k)$ in $M\setminus\{c_i,i\in\NN\}$ has a positive measure.
First observe that
the $r$-neighborhood of $(\Psi,n_0,h_1,n_1,\ldots,h_k,n_k)$ in $M\setminus\{c_i,i\in\NN\}$
contains vertices of $V_\infty$ only (otherwise, $V_f$ would contain a vertex whose $2r$-neighborhood has a positive measure).
However, the definition of the modeling $M$ implies that the penultimate coordinates of all the vertices
in the $r$-neighborhood of $(\Psi,n_0,h_1,n_1,\ldots,h_k,n_k)$ are $h_k\pm\sqrt{2}i\mod 1$ for $i=-r,\ldots,+r$ (here,
we use that the $r$-neighborhood does not contain a vertex from $V_f$).
Since the set of all vertices of $V_{\infty}$ with the penultimate coordinate equal to $h_k\pm\sqrt{2}i\mod 1$, $i=-r,\ldots,+r$,
has measure zero (because of the set of possible values for this coordinate has measure zero in $[0,1)$),
the $r$-neighborhood of $(\Psi,n_0,h_1,n_1,\ldots,h_k,n_k)$ in $M\setminus\{c_i,i\in\NN\}$ also has measure zero.
\end{proof}

\section{Graphs with bounded tree-width}
\label{sec:main}

Theorem~\ref{thm:compose} almost readily yields our main result.

\setcounter{theorem}{\thestoremaintheorem}
\begin{theorem}
Let $k$ be a positive integer.
Every first-order convergent sequence of graphs with tree-width at most $k$
has a modeling limit satisfying the strong finitary mass transport principle.
\end{theorem}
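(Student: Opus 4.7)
The plan is to reduce Theorem~\ref{thm:main} to Theorem~\ref{thm:compose} via a quantifier-free interpretation of graphs of tree-width at most $k$ inside $2$-edge-colored rooted marked $k$-trees. Given a first-order convergent sequence $(G_n)_{n\in\NN}$ of graphs with tree-width at most $k$, I use Proposition~\ref{prop:tw} to produce, for each $n$, an orientation of $G_n$ that embeds as a spanning subgraph of some rooted $k$-tree $H_n$ on the same vertex set, and I color the edges of $H_n$ with the first color if they arise from $G_n$ and with the second color otherwise. Under this encoding, the undirected edge relation of $G_n$ is quantifier-free definable in $H_n$: two distinct vertices are adjacent in $G_n$ if and only if a first-color directed edge joins them in either direction. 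If the orders $|G_n|$ are bounded, the claim is trivial, since some isomorphism type appears infinitely often and serves as its own modeling limit; so I may assume $|G_n|\to\infty$.

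Next, I extract a first-order convergent subsequence of $(H_n)_{n\in\NN}$ (treated as a sequence in the enriched signature of $2$-edge-colored rooted $k$-trees) and apply the analogue of Lemma~\ref{lm:unary} in this signature to obtain, along a further subsequence, a null-partitioned first-order convergent sequence $(H'_n)_{n\in\NN}$ of $2$-edge-colored rooted marked $k$-trees; this is legitimate because the underlying class of graphs has tree-width at most $k$ and is thus nowhere-dense and monotone. Theorem~\ref{thm:compose} then yields a modeling limit $M'$ of $(H'_n)_{n\in\NN}$ satisfying the strong finitary mass transport principle, and continuing to satisfy it after all edges of the second color are removed.

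Finally, I define $M$ to be the modeling on the vertex space of $M'$ whose edge set is the underlying undirected edge relation of the first-color edges of $M'$, with all unary markings and orientation data forgotten. Because the reduction of $G_n$ inside $H'_n$ is quantifier-free, every first-order formula $\psi$ over the graph signature pulls back to a first-order formula $\widetilde\psi$ over the enriched signature so that $\langle\psi,G_n\rangle=\langle\widetilde\psi,H'_n\rangle$ and $\langle\psi,M\rangle=\langle\widetilde\psi,M'\rangle$, and measurability of first-order definable subsets of $M^\ell$ transfers from measurability in $(M')^\ell$; since every subsequence of a first-order convergent sequence has the same first-order limit as the full sequence, $M$ is a modeling limit of $(G_n)_{n\in\NN}$. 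The strong finitary mass transport principle on $M$ is precisely the second variant of the principle guaranteed for $M'$ by Theorem~\ref{thm:compose}, as the undirected neighbor relation of $M$ coincides with the set of pairs joined by a first-color edge of $M'$.

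The main bookkeeping hurdle is verifying that Lemma~\ref{lm:unary} and the standard compactness argument for first-order convergent subsequences both extend verbatim to the enriched signature of $2$-edge-colored rooted marked $k$-trees; once these extensions are in place, Theorem~\ref{thm:compose} together with the quantifier-free interpretation from graphs of tree-width at most $k$ into $2$-edge-colored rooted $k$-trees does essentially all of the real work.
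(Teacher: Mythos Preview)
Your proposal is correct and follows essentially the same route as the paper's proof: embed each $G_n$ as a spanning subgraph of a rooted $k$-tree with the original edges marked by one of the two colors, pass to a first-order convergent subsequence, apply Lemma~\ref{lm:unary} to obtain a null-partitioned sequence, invoke Theorem~\ref{thm:compose}, and then restrict the resulting modeling to the first-color edge relation, using the last item of Theorem~\ref{thm:compose} to conclude the strong finitary mass transport principle. Your additional remarks (handling the bounded-order case explicitly, spelling out the quantifier-free interpretation, and flagging that Lemma~\ref{lm:unary} and the subsequence-extraction argument must be read in the enriched signature) are all appropriate and do not depart from the paper's approach.
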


\begin{proof}
Let $(H_n)_{n\in\NN}$ be a first-order convergent sequence of graphs with tree-width at most $k$.
As every graph with tree-width at most $k$ is a subgraph of a rooted $k$-tree (by Proposition~\ref{prop:tw}),
for every $n\in\NN$,
there exists a rooted $k$-tree $H'_n$ such that $H_n$ is a spanning subgraph of $H'_n$.
Color the edges of $H'_n$ with two colors representing whether the edge is also an edge of $H_n$ or not.
Hence, $(H'_n)_{n\in\NN}$ is a sequence of $2$-edge-colored rooted $k$-trees.
Let $(G_n)_{n\in\NN}$ be a subsequence of $(H'_n)_{n\in\NN}$ that is first order convergent.
By Lemma~\ref{lm:unary},
there exists a first order convergent null-partitioned sequence $(G'_n)_{n\in\NN}$
obtained from a subsequence of $(G_n)_{n\in\NN}$ by interpreting unary relational symbols $U_i$, $i\in\NN$.
Lemma~\ref{lm:modeling} and Theorem~\ref{thm:compose} yield that
the sequence $(G'_n)_{n\in\NN}$ has a modeling limit $M_G$ satisfying the properties given in Theorem~\ref{thm:compose}.
Let $M_H$ be the modeling obtained from $M_G$ by removing all relations except the binary relation encoding the edge-color that
represents edges of $H'_n$ that are also in $H_n$.
The modeling $M_H$ is a modeling limit of the sequence $(H_n)_{n\in\NN}$ and
it satisfies the strong finitary mass transport principle by the last point in the statement of Theorem~\ref{thm:compose}.
\end{proof}

\section*{Acknowledgement}

The authors would like to thank both anonymous reviewers for their detailed and insightful comments,
which have helped to significantly improve the presentation of the arguments given in the paper and
also to clarify several technical aspects of the presented proofs.

\bibliographystyle{bibstyle}
\bibliography{folim-tw}
\end{document}